\renewcommand{\subsectionmark}[1]{} 
\theoremstyle{plain}
\theoremstyle{plain}
\newtheorem{theorem}{Theorem}[section]
\newtheorem{proposition}[theorem]{Proposition}
\newtheorem{lemma}[theorem]{Lemma}
\theoremstyle{definition}
\newtheorem{definition}[theorem]{Definition}
\theoremstyle{remark}
\newcommand{\N}{\mathbb N}
\newcommand{\R}{\mathbb R} 
\newcommand{\C}{\mathbb C}
\newcommand{\K}{\mathbb K}
\renewcommand{\epsilon}{\varepsilon}
\newcommand{\set}[2]{ \left\{  #1  :  #2  \right\} }
\newcommand{\smset}[1]{\{ #1 \}}
\newcommand{\func}[5]{#1 \colon #2 \longrightarrow #3 : #4 \mapsto #5}
\newcommand{\smfunc}[3]{#1 \colon #2 \longrightarrow #3}
\newcommand{\nnfunc}[4]{#1 \longrightarrow #2 : #3 \mapsto #4}
\newcommand{\nnsmfunc}[2]{#1 \longrightarrow #2}
\newcommand{\smnnfunc}[2]{\nnsmfunc{#1}{#2}}
\newcommand{\Func}[5]{
\begin{array}{rl}
   #1 : #2 & \longrightarrow #3   \\
        #4 & \longmapsto     #5   \\
\end{array}
}
\newcommand{\nnFunc}[4]{
\begin{array}{rl}
        #1 & \longrightarrow #2   \\
        #3 & \longmapsto     #4   \\
\end{array}
}
\newcommand{\oBallin}[3]{\mathrm{B}_{#1}^{#2}\left(#3 \right)}
\newcommand{\norm}[1]{\ensuremath{\left\| #1 \right\|}}
\newcommand{\opnorm}[1]{\ensuremath{  \norm{  #1 }_{\mathrm{op}} }\!  }
\newcommand{\infnorm}[1]{\ensuremath{  \norm{  #1 }_\infty }}
\newcommand{\supnorm}[1]{\infnorm{#1}}
\newcommand{\Xnorm}[1]{\ensuremath{  \norm{  #1 }_{X} }}
\newcommand{\Znorm}[1]{\ensuremath{  \norm{  #1 }_{Z} }}
\newcommand{\gnorm}[1]{\ensuremath{  \norm{  #1 }_{\g} }}
\newcommand{\abs}[1]{\ensuremath{| #1 |}}
\newcommand{\g}{\mathfrak{g}}
\renewcommand{\L}{\mathbf{L}}
\newcommand{\Exp}{\mathrm{Exp}}
\newcommand{\generatedby}[1]{\ensuremath{ \left\langle   #1  \right\rangle  }}
\newcommand{\seqn}[1]{\left(#1\right)_{n\in \N}}
\newcommand{\BoundOp}[1]{\mathcal{L}\left(#1\right)}
\newcommand{\BoundOpFromTo}[2]{\mathcal{L}\left(#1,#2\right)}
\newcommand{\BC}[2]{\mathrm{BC} \left( #1 , #2 \right)}
\newcommand{\injepsilon}{\epsilon_\circ}
\newcommand{\Sym}[3]{\mathrm{Sym}^{#1} \left( #2 , #3 \right)}
\newcommand{\Pow}[3]{\mathrm{Pow}^{#1} \left( #2 , #3 \right)}
\newcommand{\Pol}[3]{\mathrm{Pol}^{#1} \left( #2 , #3 \right)}
\newcommand{\Hold}[4]{\mathrm{BC}^{#1,#2}\!\!\left( #3, #4 \right) }
\newcommand{\Hnorm}[3]{\ensuremath{  \norm{  #1 }_{(#2,#3)} }}
\newcommand{\HSnorm}[3]{\ensuremath{  p_{(#2,#3)}(#1) }}
\newcommand{\diam}{\mathrm{diam}}
\newcommand{\FreC}{\mathrm{FC}}
\newcommand{\CC}{\mathrm{C}}
\newcommand{\BCH}{\emph{BCH}}
\newcommand{\Frechet}{Fréchet}
\newcommand{\Holder}{Hölder}
\title{Lie Groups Associated to \Holder-Continuous Functions}
\author{Rafael Dahmen}
\date{August 26, 2009}
\begin{document}

\maketitle 		

\section*{Abstract}
\markright{\textsc{Abstract}}

We proof some basic tools about spaces of \Holder-continuous functions between (in general infinite dimensional) Banach spaces
and use them to construct new examples of infinite dimensional (LB)-Lie groups, following the strategy of \cite{MeinArtikel}.


\tableofcontents


\section*{Introduction}
\markright{\textsc{Introduction}}
\addcontentsline{toc}{section}{Introduction}


In \cite{MeinArtikel} (Theorem C) I gave a sufficient criterion for the union of an ascending sequence of Banach-Lie groups to be an (LB)-Lie group. The purpose of this paper is to give an example of such an ascending sequence using Banach spaces of \Holder-continuous functions.
In Section \ref{sec_DIFF} we start by stating some facts about differential calculus in infinite dimensional spaces.
In Section \ref{sec_PRELIMINARIES} we will define the concept of \Holder-continuous functions between Banach spaces and we will introduce the spaces $\Hold{k}{s}{\Omega}{Z}$ and show some properties of them.
This will be used in Section \ref{sec_LIEGROUPS} to construct Banach-Lie groups associated to these spaces. Finally, we will be able to use Theorem C of \cite{MeinArtikel} to construct  (LB)-Lie groups.


\section{\Frechet-Differentiable Functions}									\label{sec_DIFF}
Let $\K\in\smset{\R,\C}$.

\subsection{Definition and easy Results}
We begin with two different notions of differentiability in infinite dimensional vector spaces: (Details can be found in \cite{MR1911979} and in \cite{MR830252})
\begin{definition}[$\CC^k$ in the sense of Michal-Bastiani]							\label{def_Ck_MB}
 Let $X$ and $Z$ be locally convex topological $\K$-vector spaces and let $\Omega$ be an open nonempty subset of $X$.
 \begin{itemize}
 \item [(i)]	 
	A mapping $\smfunc{\gamma}{\Omega}{Z}$ is called $\CC^1$, if
 	for each $(x,v)\in\Omega\times X$ the directional derivative
	\[
 	 d\gamma(x,v):=\lim_{t\to0}\frac{\gamma(x+tv)-\gamma(x)}{t}
	\]
 	exists and if the map
	\[
	 \smfunc{d\gamma}{\Omega\times X}{Z}
	\]
	is continuous.
 \item [(ii)]
	Inductively, we say that $\smfunc{\gamma}{\Omega}{Z}$ is of class $\CC^k$ if it is $\CC^1$ and if $\smfunc{d\gamma}{\Omega\times X}{Z}$ is $\CC^{k-1}$. We call $\gamma$  \emph{smooth} or $\CC^\infty$ if is $\CC^k$ for all $k\in\N$.
\end{itemize}

\end{definition}

It is an easy consequence of this definition that if $\gamma$ is $\CC^1$ and $x\in\Omega$, then the following is a continuous linear map:
\[
 \func{\gamma'(x):=d\gamma(x,\cdot)}{X}{Z}{v}{d\gamma(x,v)}.
\]

The following definition of differentiability is more well-known but has the disadvantage that it only works in normed spaces:

\begin{definition}[$\FreC^k$-maps]
 Let $X$ and $Z$ be normed spaces over $\K$ and let $\Omega$ be an open subset of $X$.
\begin{itemize}
 \item [(i)] A mapping $\smfunc{\gamma}{\Omega}{Z}$ is called \emph{\Frechet-differentiable} at the point $x\in X$ if there exists a $T\in\BoundOp{X,Z}$ such that
\[
 \lim_{v\to0}\frac{\gamma(x+tv)-\gamma(x)-Tv}{\Xnorm{v}}=0
\]
 (in this case, this map $T$ is equal to $\gamma'(x) = d\gamma(x,\cdot) $ as defined in Definition \ref{def_Ck_MB}).
 \item [(ii)] The map $\gamma$ is called $\FreC^1$ if it is everywhere \Frechet-differentiable and the map
\[
 \func{\gamma'}{\Omega}{\left(\BoundOp{X,Z},\opnorm{\cdot}\right)}{x}{\gamma'(x)=d\gamma(x,\cdot)}
\]
 is continuous.
 \item [(iii)] Inductively, we say that $\smfunc{\gamma}{\Omega}{Z}$ is of class $\FreC^k$ if it is $\FreC^1$ and if $\smfunc{\gamma'}{\Omega}{\BoundOpFromTo{X}{Z}}$ is $\CC^{k-1}$. We will use the notation $\gamma^{(1)}:=\gamma'$ and 
\[
 \gamma^{(k)}(x)(v_1,\ldots,v_k) := \left(\gamma^{(k-1)} \right)'(x) (v_1)(v_2,\ldots,v_k).
\]
 Note that each
 $\smfunc{\gamma^{(k)}(x)}{X^k}{Z}$ is a symmetric $k$-linear map.
\end{itemize}
\end{definition}

These two notions are connected via the following
\begin{lemma}[Criterion of \Frechet-Differentiability]			\label{lem_suff_crit_frec}
Let $X,Z$ be normed spaces over $\K\in\smset{\R,\C}$, $\Omega\subseteq X$ open. Then $\smfunc{\gamma}{\Omega}{Z}$ is $\FreC^1$ if and only if it is $\CC^1$ and  the map
\[
 \Func{\gamma'}{\Omega}{\left(\BoundOp{X,Z} ,\opnorm{\cdot} \right)              }
 	{x}{\left(v\mapsto \lim\limits_{t\to 0}\frac{\gamma(x+tv)-\gamma(x) }{t} \right)}
\]
 is continuous.
\end{lemma}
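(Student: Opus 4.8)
The plan is to prove the two implications separately: the forward implication is essentially formal, while the converse carries the analytic substance.

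For the direction ``$\FreC^1\Rightarrow$ ($\CC^1$ and continuity of the displayed map)'', I would first record that the evaluation map $\smfunc{\mathrm{ev}}{\BoundOp{X,Z}\times X}{Z}$, $(T,v)\mapsto Tv$, is continuous, being bilinear with $\norm{Tv}\le\opnorm{T}\norm{v}$. If $\gamma$ is $\FreC^1$, then it is \Frechet-differentiable at every point, so each directional derivative $d\gamma(x,v)$ exists and equals $\gamma'(x)v$; hence $d\gamma=\mathrm{ev}\circ(\gamma'\times\id_X)$ is a composite of continuous maps and is therefore continuous, so $\gamma$ is $\CC^1$. Under this identification the map displayed in the lemma is exactly $\smfunc{\gamma'}{\Omega}{(\BoundOp{X,Z},\opnorm{\cdot})}$, which is continuous by the very definition of $\FreC^1$. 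No estimates are needed here.

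For the converse, assume $\gamma$ is $\CC^1$ and that $x\mapsto\gamma'(x)=d\gamma(x,\cdot)$ is continuous into $(\BoundOp{X,Z},\opnorm{\cdot})$; note that $\gamma'(x)$ is a well-defined continuous linear map by the remark following Definition~\ref{def_Ck_MB}. Fix $x\in\Omega$ and choose $r>0$ with $\cBall{r}{x}\subseteq\Omega$. For $v\in X$ with $\norm{v}<r$ the segment $[x,x+v]$ lies in $\Omega$, and the curve $\smfunc{c}{[0,1]}{Z}$, $c(t):=\gamma(x+tv)-t\,\gamma'(x)v$, is $\CC^1$ by the chain rule, with continuous derivative $c'(t)=\bigl(\gamma'(x+tv)-\gamma'(x)\bigr)v$. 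Applying the mean value inequality for curves into a normed space yields
\[
 \norm{\gamma(x+v)-\gamma(x)-\gamma'(x)v}=\norm{c(1)-c(0)}\le\sup_{t\in[0,1]}\norm{c'(t)}\le\norm{v}\cdot\sup_{t\in[0,1]}\opnorm{\gamma'(x+tv)-\gamma'(x)}.
\]
Given $\epsilon>0$, continuity of $\gamma'$ at $x$ furnishes $\delta\in(0,r]$ with $\opnorm{\gamma'(y)-\gamma'(x)}<\epsilon$ whenever $\norm{y-x}<\delta$; then for $\norm{v}<\delta$ the right-hand side is at most $\epsilon\norm{v}$, so $\gamma$ is \Frechet-differentiable at $x$ with derivative $\gamma'(x)$. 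As $x$ was arbitrary and $x\mapsto\gamma'(x)$ is continuous by hypothesis, $\gamma$ is $\FreC^1$.

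The step I expect to be the main obstacle is precisely the mean value inequality invoked above: since $Z$ is only assumed to be normed (not complete), one cannot simply write $c(1)-c(0)=\int_0^1 c'(t)\,dt$ and estimate the integral. The remedy is the classical one-variable argument: for fixed $\epsilon>0$ the set $\set{s\in[0,1]}{\norm{c(s)-c(0)}\le(M+\epsilon)s}$, with $M:=\sup_{t\in[0,1]}\norm{c'(t)}$, is nonempty and closed, and differentiability of $c$ at each point forces its supremum to be $1$; letting $\epsilon\to0$ gives $\norm{c(1)-c(0)}\le M$. This reasoning is valid verbatim in any normed space, and in any case this form of the mean value theorem, together with the chain rule for $\CC^1$-maps used above, is available from \cite{MR1911979} and \cite{MR830252}. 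The remaining points are routine bookkeeping.
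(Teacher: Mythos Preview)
Your proof is correct and follows the same overall strategy as the paper: both reduce the converse direction to a one-variable estimate along the segment $t\mapsto x+tv$ and bound the remainder by $\sup_{t\in[0,1]}\opnorm{\gamma'(x+tv)-\gamma'(x)}\cdot\Xnorm{v}$. The only genuine difference is the tool used to obtain that bound. The paper writes $\gamma(x+v)-\gamma(x)=\int_0^1\gamma'(x+tv).v\,dt$ and estimates the integral, then invokes continuity of parameter-dependent integrals to pass to the limit $v\to0$. You instead apply the mean value inequality to the curve $c(t)=\gamma(x+tv)-t\,\gamma'(x)v$ and finish with a direct $\epsilon$--$\delta$ argument. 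Your route is slightly more elementary and, as you correctly observe, sidesteps the question of whether the Riemann integral of a continuous curve actually lands in the (merely normed, possibly incomplete) space $Z$; the paper's argument tacitly assumes this integral exists in $Z$, which strictly speaking requires either completeness or an appeal to the completion. So your version is a mild technical improvement, but the two proofs are otherwise interchangeable.
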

\begin{proof}
 If $\gamma$ is $\FreC^1$, it is clearly $\CC^1$ and $\gamma'$ is continuous. Conversely, we assume that $\gamma$ is $\CC^1$ and that $\gamma'$ is continuous. We will show that $\gamma$ is \Frechet-differentiable at each point.
 Therefore, let $x\in \Omega$ be fixed and let $v$ so small that the interval $[x,x+v]:=\set{x+tv}{t\in[0,1]}$ lies in $\Omega$. Then we define the curve
\[
 \func{\eta_v}{[0,1]}{Z}{t}{\gamma(x+tv).}
\]
Since $\gamma$ is $\CC^1$, the curve $\eta_v$ is also $\CC^1$ with
\[
 \eta_v'(t) = d\gamma(x+tv , v) = \gamma'(x+tv).v.
\]
Now, we can write:
\begin{align*}
 \Znorm{\frac{\gamma(x+v)-\gamma(x)-\gamma'(x).v}{\Xnorm{v}}}\!\!	  &	=	\frac{1}{\Xnorm{v}\!\!\!}\Znorm{\eta_v(1)-\eta_v(0)-\gamma'(x).v}
									\\&	=	\frac{1}{\Xnorm{v}\!\!\!}\Znorm{\int_0^1 \eta_v'(t)\ dt-\gamma'(x).v}
									\\&	=	\frac{1}{\Xnorm{v}\!\!\!}\Znorm{\int_0^1 \left(\gamma'(x+tv).v-\gamma'(x).v\right) dt}
									\\&	=	\frac{1}{\Xnorm{v}\!\!\!}\int_0^1\Znorm{ \left(\gamma'(x+tv)-\gamma'(x)\right).v}\ dt
									\\&	\leq	\int_0^1\opnorm{ \gamma'(x+tv)-\gamma'(x)}\ dt
\end{align*}
The map $\smfunc{\gamma'}{\Omega}{\BoundOpFromTo{X}{Z}}$ is continuous by assumption. Therefore, the integrand on the right hand side of this inequality is continuous in $t$ and in $v$. So, the theorem of parameter dependend integrals yields that the integral tends to $0$, when $v$ converges to $0$. This concludes the proof.
\end{proof}

\subsection{Polynomials}

\begin{proposition}[Interpolation of Polynomials]					\label{prop_interpol_pol}
\newcommand{\XBall}{\oBallin{1}{X}{0}}
\newcommand{\KBall}{\oBallin{1}{\K}{0}}
Let $X$ and $Z$ be normed spaces over $\K$ and let $k\in\N_0$ be given.
 
 Denote by $\Pow{j}{\XBall}{Z}$ the vector space of all $j$-homogeneous polynomials from $X$ to $Z$, restricted to $\XBall$ regarded as a subspace of the normed space
\[
 \left( \BC{\XBall}{Z},\supnorm{\cdot}\right).
\]
 Denote by $\Pol{k}{\XBall}{Z}$ the vector space of polynomials of maximal degree $k$, which is generated by $\left(\Pow{j}{\XBall}{Z}\right)_{j\leq k}$.

 Then the map
 \[
  \nnFunc{	\prod\limits_{j=0}^k \left(\Pow{j}{\XBall}{Z},\supnorm{\cdot}\right) 	}{	\left(	\Pol{k}{\XBall}{Z}	,\supnorm{\cdot}\right)	}
		{(\gamma_j)_{j} }{	\sum_{j=0}^k \gamma_j                      }
 \]
 is a topological isomorphism.
\end{proposition}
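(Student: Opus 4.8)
The plan is to write the inverse of the summation map down explicitly, using one-variable polynomial interpolation along the rays through the origin. This produces injectivity and continuity of the inverse in one stroke, and it has the pleasant side effect of needing no completeness (no open mapping theorem).

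First I would dispatch the easy half. The summation map $\Sigma\colon(\gamma_j)_j\mapsto\sum_{j=0}^k\gamma_j$ is surjective by the very definition of $\Pol{k}{\oBallin{1}{X}{0}}{Z}$ as the linear span of the $\Pow{j}{\oBallin{1}{X}{0}}{Z}$, $j\le k$, and it is continuous because $\supnorm{\sum_j\gamma_j}\le\sum_j\supnorm{\gamma_j}$; here the finite product carries any of the (equivalent) product norms, which is harmless for a topological statement. What remains — and this is the only non-routine point — is injectivity together with boundedness of the algebraic inverse.

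For that I would fix once and for all $k+1$ pairwise distinct scalars $t_0,\dots,t_k\in\K$ with $\abs{t_i}\le1$ (for instance $t_i:=i/k$ when $k\ge1$; the case $k=0$ is trivial). The Vandermonde matrix $V:=(t_i^{\,j})_{0\le i,j\le k}$ is invertible over $\K$; write $V^{-1}=:(w_{ji})_{0\le j,i\le k}$. Given $\gamma=\sum_{j=0}^k\gamma_j$ with $\gamma_j$ $j$-homogeneous and $x\in\oBallin{1}{X}{0}$, one has $\Xnorm{t_ix}=\abs{t_i}\,\Xnorm{x}\le\Xnorm{x}<1$, so $t_ix\in\oBallin{1}{X}{0}$ and, by homogeneity,
\[
 \gamma(t_ix)=\sum_{j=0}^k t_i^{\,j}\,\gamma_j(x)\qquad(i=0,\dots,k),
\]
that is, $\bigl(\gamma(t_ix)\bigr)_i=V\cdot\bigl(\gamma_j(x)\bigr)_j$ in $Z^{k+1}$. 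Inverting $V$ gives the key identity
\[
 \gamma_j(x)=\sum_{i=0}^k w_{ji}\,\gamma(t_ix)\qquad(j=0,\dots,k),
\]
valid for every $x\in\oBallin{1}{X}{0}$.

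From this the conclusion is immediate: if $\gamma=0$ on the ball then each $\gamma_j=0$, so $\Sigma$ is injective and hence, with surjectivity, bijective; and taking the supremum over $x\in\oBallin{1}{X}{0}$ yields $\supnorm{\gamma_j}\le C_k\,\supnorm{\gamma}$ with $C_k:=\max_{0\le j\le k}\sum_{i=0}^k\abs{w_{ji}}$, so $\gamma\mapsto(\gamma_j)_j$ is continuous. The "hard part" is really just this continuity of the inverse, and the explicit interpolation formula settles it; the one genuine subtlety to watch is the choice of nodes — they must lie in the closed unit disk of $\K$, so that $t_ix$ stays inside the open ball on which $\gamma$ is defined, which is why I insist on $\abs{t_i}\le1$ from the start. (Over $\C$ one could instead take the $(k+1)$-st roots of unity and get the sharp constant $C_k=1$ via the discrete Cauchy formula $\gamma_j(x)=\frac1{k+1}\sum_{m=0}^k\omega^{-jm}\gamma(\omega^mx)$ with $\omega=e^{2\pi i/(k+1)}$, but the Vandermonde version treats $\K=\R$ and $\K=\C$ uniformly.)
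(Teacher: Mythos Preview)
Your proof is correct and follows essentially the same strategy as the paper: reduce to one-variable interpolation along rays $t\mapsto\gamma(tx)$, recover each $\gamma_j(x)$ as a fixed linear combination of the values $\gamma(t_ix)$, and read off the bound. The only cosmetic differences are that the paper phrases the inversion via Lagrange basis polynomials $\Lambda_\mu$ (whose coefficients are exactly your Vandermonde inverse entries $w_{ji}$) and chooses its nodes in the open interval $]0,1[$, whereas you allow the endpoints; both choices are fine.
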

\begin{proof}
\newcommand{\XBall}{\oBallin{1}{X}{0}}
\newcommand{\KBall}{\oBallin{1}{\K}{0}}
The map is clearly bijective and continuous. It remains to show that for every $j_0\leq k$ the coefficient map
\[
 \nnFunc	{	\left(	\Pol{k}{\XBall}{Z}	,\supnorm{\cdot}\right)	}{	\left(\Pow{j_0}{\XBall}{Z},\supnorm{\cdot}	\right)	}
		{\gamma=\sum_{j=0}^k \gamma_j }{\gamma_{j_0}}
\]
is continuous.

We fix a subset $F\subseteq ]0,1[$ with $k+1$ elements. For every point $\mu\in F$ we define the corresponding Lagrange polynomial:
\[
	\Lambda_\mu(t) :=\prod_{\substack{\nu\in F\\ \nu\neq\mu }}\frac{t-\nu}{\mu-\nu}
			= \sum_{j=0}^k \lambda_{\mu,j}\  t^j \in \R[t]
\]
This is the unique polynomial of degree $k$ such that $\Lambda_\mu(\nu)=\delta_{\mu,\nu}$ for $\nu\in F$.
The coefficients $\lambda_{\mu,j}\in\R$ depend only on $k$ and $F$ and are therefore considered fixed for the rest of the proof.

Now, suppose that a function $\smfunc{g}{F}{Z}$ from the finite set $F$ into the normed space $Z$ is given. Then there is a unique polynomial $\smfunc{\widetilde{g}}{\K}{Z}$ such that $\widetilde{g}|_F = g$. This polynomial is given by:
\[
 \widetilde{g}(t) 	:= \sum_{\mu\in F} g(\mu)\cdot \Lambda_\mu(t)
		 = \sum_{j=0}^k \left(\sum_{\mu\in F} g(\mu)\cdot  \lambda_{\mu,j}\right)  t^j
\]
We may estimate the norm of the $j$-th coefficient of $\widetilde{g}$:
\begin{align*}
 \Znorm{\sum_{\mu\in F} g(\mu)\cdot  \lambda_{\mu,j}}	  &	\leq	\sum_{\mu\in F} \abs{\lambda_{\mu,j}} \supnorm{ g}	  .
\end{align*}

Now, we consider a continuous polynomial $\smfunc{\gamma= \sum_{j=0}^k \gamma_k	}{X}{Z}$, where each $\gamma_j$ is a continuous $j$-homogeneous polynomial. Let $v\in \XBall$. Then $\gamma_{j_0}(v)$ is the $j_0$-th coefficient of the polynomial 
\[
 g_v(t) := \gamma(tv) = \sum_{j=0}^k \gamma_j(v)\  t^j
\]
and we may estimate its norm by:
\begin{align*}
 \Znorm{\gamma_{j_0}(v)}	  &	\leq	\sum_{\mu\in F} \abs{\lambda_{\mu,j}} \supnorm{g_v}	  
				\leq	\sum_{\mu\in F} \abs{\lambda_{\mu,j}} \supnorm{g|_{\XBall}}	  .
\end{align*}
Since $v\in \XBall$ was arbitrary, this shows
\[
 \supnorm{\gamma_j|_{\XBall}}\leq \left(\sum_{\mu\in F} \abs{\lambda_{\mu,j}}\right) \supnorm{\gamma|_{\XBall}}
\]
which finishes the proof.
\end{proof}

\begin{proposition}[Taylor's Formula]							\label{prop_taylor}
 Let $X$ and $Z$ be normed spaces over $\K$ and let $\Omega$ be an open convex subset of $X$ and $x_0\in X$. Assume $\smfunc{\gamma}{\Omega}{Z}$ is $\FreC^{k}$ with $k\geq 1$. Then we have for all $v\in X$ such that $x+v\in\Omega$:
\begin{align*}
 \hbox{(a)\quad}	\gamma(x_0+v)			=& \!  \sum_{j\leq k-1} \frac{	\gamma^{(j)}(x_0)(v,\ldots,v)	}{j!}				 \\
							 & + \int_0^1\frac{(1-t)^{k-1}}{(k-1)!} \gamma^{(k)}(x_0+tv) (v,\ldots,v) dt. 	\\
 \hbox{(b)\quad}	\gamma(x_0+v)			=&   \sum_{j\leq k} \frac{	\gamma^{(j)}(x_0)(v,\ldots,v)	}{j!}				 \\
							 & + \int_0^1 \frac{(1-t)^{k-1}}{(k-1)!}\! \left(  \gamma^{(k)}(x_0+tv) -\gamma^{(k)}(x_0)\right)\!\! (v,\ldots,v)  dt. 
\end{align*}
\end{proposition}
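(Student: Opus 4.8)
The plan is to prove part~(a) by induction on $k$, and then derive part~(b) as an immediate corollary by isolating the $j=k$ term.

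For the base case $k=1$, part~(a) reads $\gamma(x_0+v) = \gamma(x_0) + \int_0^1 \gamma'(x_0+tv)(v)\,dt$, which is the fundamental theorem of calculus applied to the $\CC^1$-curve $\eta_v\colon[0,1]\to Z$, $\eta_v(t) := \gamma(x_0+tv)$ — exactly the curve already used in the proof of Lemma~\ref{lem_suff_crit_frec}, whose derivative is $\eta_v'(t) = \gamma'(x_0+tv)(v)$ by the chain rule for $\CC^1$-maps. (Convexity of $\Omega$ guarantees the segment $[x_0,x_0+v]$ stays in $\Omega$, so $\eta_v$ is well-defined.) For the inductive step, assume the formula holds for $k-1$ and let $\gamma$ be $\FreC^k$. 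Starting from the order-$(k-1)$ expansion, I would rewrite the remainder term $\int_0^1 \frac{(1-t)^{k-2}}{(k-2)!}\gamma^{(k-1)}(x_0+tv)(v,\ldots,v)\,dt$ by applying integration by parts in the variable $t$: integrate the scalar factor $\frac{(1-t)^{k-2}}{(k-2)!}$ to $-\frac{(1-t)^{k-1}}{(k-1)!}$ and differentiate $t\mapsto \gamma^{(k-1)}(x_0+tv)(v,\ldots,v)$, whose derivative is $\gamma^{(k)}(x_0+tv)(v,v,\ldots,v)$ by the chain rule (using that $\gamma^{(k-1)}$ is $\FreC^1$ and $k$-linearity). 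The boundary term at $t=0$ produces exactly $\frac{\gamma^{(k-1)}(x_0)(v,\ldots,v)}{(k-1)!}$, which upgrades the polynomial part from degree $k-2$ to degree $k-1$; the boundary term at $t=1$ vanishes since $(1-t)^{k-1}$ kills it (for $k\geq 2$); and the new integral is the order-$k$ remainder. This yields part~(a) for $k$.

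For part~(b): in formula~(a) the polynomial sum runs to $j\leq k-1$, while in~(b) it runs to $j\leq k$. So I would simply add and subtract $\frac{\gamma^{(k)}(x_0)(v,\ldots,v)}{k!}$. It then suffices to observe that
\[
 \frac{\gamma^{(k)}(x_0)(v,\ldots,v)}{k!} = \int_0^1 \frac{(1-t)^{k-1}}{(k-1)!}\,\gamma^{(k)}(x_0)(v,\ldots,v)\,dt,
\]
since $\int_0^1 \frac{(1-t)^{k-1}}{(k-1)!}\,dt = \frac{1}{k!}$ and the integrand is constant in $t$; subtracting this from the order-$k$ remainder in~(a) gives precisely the remainder in~(b).

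The main technical point to be careful about is the differentiation rule $\frac{d}{dt}\big[\gamma^{(k-1)}(x_0+tv)(v,\ldots,v)\big] = \gamma^{(k)}(x_0+tv)(v,v,\ldots,v)$, i.e.\ justifying that one may pull the scalar derivative in $t$ through the fixed multilinear evaluation at $(v,\ldots,v)$; this follows because evaluation at a fixed tuple is a continuous linear map $\BoundOpFromTo{X^{k-1}}{Z}\to Z$ composed with the $\CC^1$-curve $t\mapsto \gamma^{(k-1)}(x_0+tv)$, and the chain rule for $\CC^1$-maps applies. Beyond that, the integration-by-parts step and the interchange of the parameter integral with limits are the routine ingredients, valid since all integrands appearing are continuous $Z$-valued curves on the compact interval $[0,1]$.
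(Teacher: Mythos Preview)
Your proof is correct. For part~(a) you take a genuinely different route from the paper: you argue by induction on $k$, with the fundamental theorem of calculus as the base case and integration by parts (on the scalar factor $\frac{(1-t)^{k-2}}{(k-2)!}$ against the $Z$-valued factor $\gamma^{(k-1)}(x_0+tv)(v,\ldots,v)$) for the inductive step, working directly with $Z$-valued Riemann integrals throughout. The paper instead reduces to the classical one-dimensional Taylor formula: it composes with the curve $h(s)=\gamma(x_0+sv)$ to collapse the domain to $\R$, and then tests against continuous linear functionals on $Z$ (a Hahn--Banach separation) to collapse the codomain to $\K$. Your approach is more self-contained --- it needs neither the scalar result as a black box nor the separating-functionals argument --- while the paper's approach is shorter, outsourcing the entire induction to the well-known scalar case. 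For part~(b) the two arguments are essentially identical: the paper's instruction to ``split the difference $\gamma^{(k)}(x_0+tv)-\gamma^{(k)}(x_0)$ into two integrals and simplify'' is precisely your add-and-subtract step together with the elementary evaluation $\int_0^1 \frac{(1-t)^{k-1}}{(k-1)!}\,dt = \frac{1}{k!}$.
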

\vspace{-0.5cm}
\begin{proof}
 By setting $\func{h}{]-r,r[}{Z}{s}{\gamma(x_0+sv)}$ and using continuous linear functionals on $F$, we can reduce (a) to the classical formula where $X$ and $Z$ are one-dimensional.

 If we split the difference $\left(  \gamma^{(k)}(x_0+tv) -\gamma^{(k)}(x_0)\right)$ in the integral on the right hand side of (b) into two integrals and simplify the expression, it is easy to see that (b) follows from (a).
\end{proof}

\section{Spaces of \Holder-Continuous Functions}										\label{sec_PRELIMINARIES}
Throughout this section, let $\Omega$ be a convex bounded
open
subset of a \emph{real} Banach space $X$.
\begin{definition}[\Holder-Spaces]
Let $Z$ be a Banach space over the field \hbox{$\K\in\smset{\R,\C}$.}
\begin{itemize}
   \item [(a)] We set $\Hold{0}{0}{\Omega}{Z} := \BC{\Omega}{Z}$ to be vector space of all bounded continuous $Z$-valued functions on the set $\Omega$. It will always be endowed with the norm  $\Hnorm{\cdot}{0}{0}:=\HSnorm{\cdot}{0}{0}:=\supnorm{\cdot}$.
  \item [(b)] For a real number $s\in]0,1]$, we set
	\[
	 \Hold{0}{s}{\Omega}{Z} := \set{ \smfunc{\gamma}{\Omega}{Z}  }{\HSnorm{\gamma}{0}{s} := \sup_{\substack{x,y\in \Omega \\ x\neq y}}	\frac{\Znorm{\gamma(x)-\gamma(y)} }{\Xnorm{x-y}^s}<\infty}.
	\]
	From this definition follows at once that every $\gamma\in \Hold{0}{s}{\Omega}{Z}$ is uniformly continuous and bounded.
	We endow this vector space with the norm $\Hnorm{\cdot}{0}{s}:= \supnorm{\cdot} + \HSnorm{\cdot}{0}{s}$.
  \item [(c)] Recursively, we may define 
	\[
	 \Hold{k+1}{s}{\Omega}{Z} := \set{ \gamma\in \FreC^1(\Omega,Z) }{ \gamma' \in \Hold{k}{s}{\Omega}{ \BoundOpFromTo{X}{Z} } }
	\]
	for $k\in\N_0$ and $s\in[0,1]$. We endow this vector space with the norm $\Hnorm{\cdot}{k+1}{s}:=\supnorm{\cdot} + \HSnorm{\cdot}{k+1}{s}$ which is defined as
	\[
	 \HSnorm{\gamma}{k+1}{s} := \HSnorm{\gamma'}{k}{s}.
	\]
 \end{itemize}
\end{definition}

\subsection{Inclusion Mappings}
In this subsection, we will show that the inclusion operators between these spaces are continuous (Proposition \ref{prop_incl_general}).

We begin with the following special case where the inclusion operator behaves very nicely:
\begin{proposition}										\label{prop_incl_embed}
 For every $k\in\N_0$ the vector space $\Hold{k+1}{0}{\Omega}{Z}$ is a vector subspace of $\Hold{k}{1}{\Omega}{Z}$ and the inclusion map is an isometric embedding.
\end{proposition}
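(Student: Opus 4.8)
The plan is to induct on $k$, uniformly over all Banach target spaces. The case $k=0$ contains all the substance; the inductive step is a purely formal manipulation of the recursive definitions of the \Holder-spaces and their norms. Note that once the set inclusion $\Hold{k+1}{0}{\Omega}{Z}\subseteq\Hold{k}{1}{\Omega}{Z}$ is established, the subspace assertion is automatic, and "isometric embedding" just means that the inclusion preserves the norms.

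\textbf{Base case $k=0$.} Let $\gamma\in\Hold{1}{0}{\Omega}{Z}$, so that $\gamma\in\FreC^1(\Omega,Z)$ with $\gamma'\in\Hold{0}{0}{\Omega}{\BoundOpFromTo{X}{Z}}=\BC{\Omega}{\BoundOpFromTo{X}{Z}}$, and $\HSnorm{\gamma}{1}{0}=\HSnorm{\gamma'}{0}{0}=\supnorm{\gamma'}$. I would show $\gamma\in\Hold{0}{1}{\Omega}{Z}$ with $\HSnorm{\gamma}{0}{1}=\supnorm{\gamma'}$; adding $\supnorm{\gamma}$ to both sides then gives $\Hnorm{\gamma}{0}{1}=\supnorm{\gamma}+\supnorm{\gamma'}=\Hnorm{\gamma}{1}{0}$, which is exactly the claim. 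For the inequality $\HSnorm{\gamma}{0}{1}\le\supnorm{\gamma'}$: given $x\neq y$ in $\Omega$, the segment $[x,y]$ lies in $\Omega$ by convexity, so Taylor's formula (Proposition~\ref{prop_taylor} with $k=1$) gives $\gamma(y)-\gamma(x)=\int_0^1\gamma'(x+t(y-x)).(y-x)\,dt$, whence $\Znorm{\gamma(y)-\gamma(x)}\le\supnorm{\gamma'}\,\Xnorm{y-x}$. For the reverse inequality $\supnorm{\gamma'}\le\HSnorm{\gamma}{0}{1}$: fix $x\in\Omega$ and a unit vector $v\in X$; since $\gamma'(x).v=\lim_{t\to0}t^{-1}\bigl(\gamma(x+tv)-\gamma(x)\bigr)$ and every difference quotient has norm at most $\HSnorm{\gamma}{0}{1}$, we get $\opnorm{\gamma'(x)}\le\HSnorm{\gamma}{0}{1}$; taking the supremum over $x\in\Omega$ finishes it. Finally $\gamma$ is bounded, because $\Omega$ is bounded and $\gamma$ is Lipschitz, so $\gamma$ genuinely lies in $\Hold{0}{1}{\Omega}{Z}$.

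\textbf{Inductive step.} Suppose the proposition holds for $k-1$, applied with the Banach space $\BoundOpFromTo{X}{Z}$ in place of $Z$. Let $\gamma\in\Hold{k+1}{0}{\Omega}{Z}$; by definition $\gamma\in\FreC^1(\Omega,Z)$ and $\gamma'\in\Hold{k}{0}{\Omega}{\BoundOpFromTo{X}{Z}}$. The induction hypothesis gives $\gamma'\in\Hold{k-1}{1}{\Omega}{\BoundOpFromTo{X}{Z}}$ with $\Hnorm{\gamma'}{k-1}{1}=\Hnorm{\gamma'}{k}{0}$; cancelling the common summand $\supnorm{\gamma'}$ yields $\HSnorm{\gamma'}{k-1}{1}=\HSnorm{\gamma'}{k}{0}$. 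By the definition of $\Hold{k}{1}{\Omega}{Z}$ this means $\gamma\in\Hold{k}{1}{\Omega}{Z}$, and $\HSnorm{\gamma}{k}{1}=\HSnorm{\gamma'}{k-1}{1}=\HSnorm{\gamma'}{k}{0}=\HSnorm{\gamma}{k+1}{0}$; adding $\supnorm{\gamma}$ gives $\Hnorm{\gamma}{k}{1}=\Hnorm{\gamma}{k+1}{0}$, completing the induction.

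The only real obstacle is the identity $\HSnorm{\gamma}{0}{1}=\supnorm{\gamma'}$ in the base case; the points that need care there are that convexity of $\Omega$ is exactly what licenses the integral (mean-value) estimate, and boundedness of $\Omega$ is exactly what promotes a Lipschitz $\gamma$ to a bounded function. One should also be careful to run the induction uniformly over all Banach target spaces, since the inductive step feeds $\BoundOpFromTo{X}{Z}$, rather than $Z$, into the hypothesis.
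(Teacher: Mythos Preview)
Your proof is correct and follows essentially the same route as the paper's: reduce to the seminorm identity $\HSnorm{\gamma}{k}{1}=\HSnorm{\gamma}{k+1}{0}$, handle $k=0$ by the mean-value integral estimate in one direction and the difference-quotient limit in the other, and push to higher $k$ by applying the result to $\gamma'$ with target $\BoundOpFromTo{X}{Z}$. The paper compresses the inductive step into a single sentence, while you write it out and also make explicit that boundedness of $\gamma$ follows from Lipschitz plus $\diam\Omega<\infty$; neither difference is substantive.
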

\begin{proof}
 Since for $(k,s)\neq(0,0)$ the norm $\Hnorm{\cdot}{k}{s}$ is the sum of the $\supnorm{\cdot}$-norm and the $\HSnorm{\cdot}{k}{s}$-seminorm, it suffices to show that for every $\gamma\in \Hold{k+1}{0}{\Omega}{Z}$ the seminorms are equal:
\[
	\HSnorm{\gamma}{k}{1} = \HSnorm{\gamma}{k+1}{0}.
\]
 It suffices to show this for $k=0$. The rest follows immediately by induction on $k$.
 Let $\gamma\in \Hold{1}{0}{\Omega}{Z}$ be given. By definition of the \Holder-spaces, this means $\gamma$ is continuously differentiable with bounded \Frechet-derivative. Now, we estimate
\begin{align*}
 \Znorm{\gamma(x)-\gamma(y)} 	  &= 		\Znorm{	\int_0^1 \gamma'\big(tx+(1-t)y\big)\big(x-y\big) dt	}	
				\\&\leq		\supnorm{\gamma'}\Xnorm{x-y}
				\\&=   		\HSnorm{\gamma}{1}{0}\Xnorm{x-y}
\end{align*}
 This yields:
\[
 \HSnorm{\gamma}{0}{1}\leq \HSnorm{\gamma}{1}{0}.
\]
But conversely: Let $x_0\in \Omega, v\in X$ with $\Znorm{v}=1$ and $t\in\R^\times$ (small enough) be given. Then we may estimate:
\begin{align*}
 \Znorm{\frac{1}{t}\big( \gamma(x+tv)-\gamma(x)	\big)}	  &	= 	\frac{1}{\abs{t}} \Znorm{\gamma(x+tv)-\gamma(x)}	
							\\&	\leq	\frac{1}{\abs{t}} \cdot \HSnorm{\gamma}{0}{1} \Znorm{(x+tv)-x}	
							\\&	=   	\HSnorm{\gamma}{0}{1}.
\end{align*}
Now, as $t$ tends to zero, the left hand side converges to $\gamma'(x).v$. Since $v$ was arbitrary with norm $1$, this yields $\opnorm{\gamma'(x)}\leq \HSnorm{\gamma}{0}{1}$ and since $x$ was arbitrary, we finally obtain:
\[
 \HSnorm{\gamma}{1}{0}\leq \HSnorm{\gamma}{0}{1}.
\]
Therefore the seminorms are equal and this finishes the proof.
\end{proof}

\begin{proposition}								\label{prop_incl_s}
 Let $k\in\N_0$ and let $0<s_1<s_2\leq 1$. Then the vector space  $\Hold{k}{s_2}{\Omega}{Z}$ is a vector subspace of $\Hold{k}{s_1}{\Omega}{Z}$ and the inclusion map is continuous with operator norm at most $\max\{1,\left(\diam\Omega\right)^{s_2-s_1} \}$.
\end{proposition}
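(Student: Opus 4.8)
The plan is to reduce the statement, exactly as in the proof of Proposition~\ref{prop_incl_embed}, to a comparison of the two \Holder-seminorms, and then to exploit the fact that $\Omega$ is bounded. For $k=0$ the key observation is elementary: for $x\neq y$ in $\Omega$ one has $\Xnorm{x-y}\leq\diam\Omega$, hence
\[
 \frac{\Znorm{\gamma(x)-\gamma(y)}}{\Xnorm{x-y}^{s_1}}
   = \frac{\Znorm{\gamma(x)-\gamma(y)}}{\Xnorm{x-y}^{s_2}}\cdot\Xnorm{x-y}^{s_2-s_1}
   \leq \HSnorm{\gamma}{0}{s_2}\cdot\Xnorm{x-y}^{s_2-s_1}.
\]
Taking the supremum over $x\neq y$ gives $\HSnorm{\gamma}{0}{s_1}\leq (\diam\Omega)^{s_2-s_1}\HSnorm{\gamma}{0}{s_2}$ if $\diam\Omega\geq 1$, and $\HSnorm{\gamma}{0}{s_1}\leq\HSnorm{\gamma}{0}{s_2}$ if $\diam\Omega<1$ (split $\Omega$-pairs according to whether $\Xnorm{x-y}$ is $\geq 1$ or $<1$ to see that the bound $\max\{1,(\diam\Omega)^{s_2-s_1}\}$ always works; since $\diam\Omega<1$ already forces $\Xnorm{x-y}<1$, the factor $\Xnorm{x-y}^{s_2-s_1}\leq 1$). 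In particular $\gamma\in\Hold{0}{s_2}{\Omega}{Z}$ implies $\gamma\in\Hold{0}{s_1}{\Omega}{Z}$, and since $\HSnorm{\cdot}{0}{s_1}\leq\max\{1,(\diam\Omega)^{s_2-s_1}\}\HSnorm{\cdot}{0}{s_2}$ while the $\supnorm{\cdot}$ parts of the norms $\Hnorm{\cdot}{0}{s_1}$ and $\Hnorm{\cdot}{0}{s_2}$ agree, the inclusion has operator norm at most $\max\{1,(\diam\Omega)^{s_2-s_1}\}$.

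For general $k\geq 1$ I would argue by induction on $k$. Suppose the statement holds for $k-1$. If $\gamma\in\Hold{k}{s_2}{\Omega}{Z}$, then by definition $\gamma\in\FreC^1(\Omega,Z)$ with $\gamma'\in\Hold{k-1}{s_2}{\Omega}{\BoundOpFromTo{X}{Z}}$; by the inductive hypothesis $\gamma'\in\Hold{k-1}{s_1}{\Omega}{\BoundOpFromTo{X}{Z}}$, which is precisely the condition for $\gamma\in\Hold{k}{s_1}{\Omega}{Z}$. For the norm estimate, recall $\HSnorm{\gamma}{k}{s}=\HSnorm{\gamma'}{k-1}{s}$ by definition, so the inductive bound $\HSnorm{\gamma'}{k-1}{s_1}\leq\max\{1,(\diam\Omega)^{s_2-s_1}\}\HSnorm{\gamma'}{k-1}{s_2}$ immediately gives $\HSnorm{\gamma}{k}{s_1}\leq\max\{1,(\diam\Omega)^{s_2-s_1}\}\HSnorm{\gamma}{k}{s_2}$; again the $\supnorm{\cdot}$ parts of $\Hnorm{\cdot}{k}{s_1}$ and $\Hnorm{\cdot}{k}{s_2}$ coincide (for $k\geq 1$ both norms are $\supnorm{\cdot}+\HSnorm{\cdot}{k}{\cdot}$), so the operator-norm bound follows.

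I expect the only mildly delicate point to be the bookkeeping in the $k=0$ base case when $\diam\Omega\leq 1$: one must check that the constant $\max\{1,(\diam\Omega)^{s_2-s_1}\}=1$ is indeed valid, i.e. that $\HSnorm{\gamma}{0}{s_1}\leq\HSnorm{\gamma}{0}{s_2}$, which works because every pair $x,y\in\Omega$ satisfies $\Xnorm{x-y}\leq\diam\Omega\leq 1$ and hence $\Xnorm{x-y}^{s_2-s_1}\leq 1$. Everything else is a routine unwinding of the recursive definition of the \Holder-spaces together with the inductive hypothesis. No compactness or completeness of $Z$ is needed for this proposition; boundedness of $\Omega$ is the single geometric input.
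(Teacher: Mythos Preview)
Your proposal is correct and follows essentially the same approach as the paper: reduce to the seminorm comparison at level $k=0$ via the identity
\[
 \frac{\Znorm{\gamma(x)-\gamma(y)}}{\Xnorm{x-y}^{s_1}}
   = \frac{\Znorm{\gamma(x)-\gamma(y)}}{\Xnorm{x-y}^{s_2}}\cdot\Xnorm{x-y}^{s_2-s_1},
\]
bound $\Xnorm{x-y}^{s_2-s_1}$ using $\diam\Omega$, and then pass to general $k$ via the recursive definition $\HSnorm{\gamma}{k}{s}=\HSnorm{\gamma'}{k-1}{s}$. The paper compresses the last step into the single remark ``it suffices to show this for $k=0$'' (since $\HSnorm{\gamma}{k}{s}=\HSnorm{\gamma^{(k)}}{0}{s}$ and the $\supnorm{\cdot}$-part is independent of $s$), whereas you write out the induction explicitly; your extra discussion of the case $\diam\Omega\leq 1$ is also a point the paper leaves implicit.
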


\begin{proof}
 Once again, it suffices to show this for $k=0$. 
 \begin{align*}
  \frac{\Znorm{\gamma(x)-\gamma(y)} }{ \Xnorm{x-y}^{s_1} }	  &	=	\frac{\Znorm{\gamma(x)-\gamma(y)} }{ \Xnorm{x-y}^{s_2} }\cdot \Xnorm{x-y}^{s_2-s_1}
								\\&	\leq	\HSnorm{\gamma}{0}{s_2}\cdot \left(\diam\Omega\right)^{s_2-s_1}.
 \end{align*}
 This shows 
\[
	\HSnorm{\cdot}{0}{s_1} \leq \left(\diam\Omega\right)^{s_2-s_1}\cdot \HSnorm{\cdot}{0}{s_2}.
\]
 The corresponding inequality for $\Hnorm{\cdot}{0}{s_2}$ and $\Hnorm{\cdot}{0}{s_1}$ follows immediately.
\end{proof}

\begin{lemma}													\label{lem_incl_s0}
 Let $(k,s)\in\N_0\times]0,1]$ and $x_0\in\Omega$ be fixed.
\begin{itemize}
 \item [(a)] The linear operator
		\[
		 \nnFunc{\Hold{k}{s}{\Omega}{Z}	}{\left(\Sym{k}{X}{Z},\opnorm{\cdot}\right)}
		 {\gamma}{\gamma^{(k)}(x_0)}
		\]
		is continuous.
 \item[(b)] The linear operator
		\[
		 \nnFunc{\Hold{k}{s}{\Omega}{Z}	}{\Hold{k}{0}{\Omega}{Z}	}
		 {\gamma}{\gamma}
		\]
		is continuous.
\end{itemize}
The operator norms of these operators may be bounded by constants depending on $k, \Omega$ and $x_0$, but not on $Z$ or $s$.
\end{lemma}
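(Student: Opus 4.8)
My plan is to prove both statements simultaneously by induction on $k$, reducing everything to the base case $k=0$ and then using the recursive definition of the \Holder-norms together with Proposition \ref{prop_incl_embed} and Proposition \ref{prop_interpol_pol}.

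For $k=0$, part (a) asks for continuity of the evaluation $\gamma\mapsto\gamma(x_0)$ from $\Hold{0}{s}{\Omega}{Z}$ to $Z$ (here $\Sym{0}{X}{Z}\cong Z$), and this is immediate since $\Znorm{\gamma(x_0)}\leq\supnorm{\gamma}\leq\Hnorm{\gamma}{0}{s}$, so the operator norm is at most $1$. Part (b) for $k=0$ is exactly the inclusion $\Hold{0}{s}{\Omega}{Z}\hookrightarrow\Hold{0}{0}{\Omega}{Z}=\BC{\Omega}{Z}$, which is continuous with norm at most $1$ because $\supnorm{\gamma}\leq\Hnorm{\gamma}{0}{s}$. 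None of these bounds depends on $Z$ or $s$.

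For the inductive step, suppose both statements hold for $k$ (for all Banach spaces, in particular for $\BoundOpFromTo{X}{Z}$ in place of $Z$). Let $\gamma\in\Hold{k+1}{s}{\Omega}{Z}$; then by definition $\gamma'\in\Hold{k}{s}{\Omega}{\BoundOpFromTo{X}{Z}}$ with $\Hnorm{\gamma'}{k}{s}=\HSnorm{\gamma}{k+1}{s}\leq\Hnorm{\gamma}{k+1}{s}$. For part (a), note $\gamma^{(k+1)}(x_0)=(\gamma')^{(k)}(x_0)$ up to the canonical identification of $\Sym{k}{X}{\BoundOpFromTo{X}{Z}}$ with (a subspace of) $\Sym{k+1}{X}{Z}$, which is an isometry for the operator norms; applying the inductive hypothesis (a) to $\gamma'$ gives a bound on $\opnorm{\gamma^{(k+1)}(x_0)}$ by a constant (depending on $k,\Omega,x_0$) times $\Hnorm{\gamma}{k+1}{s}$. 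For part (b), I must bound $\Hnorm{\gamma}{k+1}{0}=\supnorm{\gamma}+\HSnorm{\gamma}{k+1}{0}=\supnorm{\gamma}+\HSnorm{\gamma'}{k}{0}$. The second summand is controlled by the inductive hypothesis (b) applied to $\gamma'$, giving $\HSnorm{\gamma'}{k}{0}\leq\Hnorm{\gamma'}{k}{0}\leq C\,\Hnorm{\gamma'}{k}{s}\leq C\,\Hnorm{\gamma}{k+1}{s}$.

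The one genuinely nontrivial point — and the main obstacle — is bounding $\supnorm{\gamma}$ itself by $\Hnorm{\gamma}{k+1}{s}$ with a constant independent of $Z$ and $s$: a priori the $\Hnorm{\cdot}{k+1}{s}$-norm only ``sees'' $\gamma$ through its value and the behavior of $\gamma'$, so one needs to recover $\gamma$ from $\gamma'$ and a single value. The idea is to use Taylor's formula (Proposition \ref{prop_taylor}), or more simply the fundamental theorem of calculus along segments in the convex set $\Omega$: for any $x\in\Omega$, writing $v=x-x_0$,
\[
 \gamma(x)=\gamma(x_0)+\int_0^1\gamma'(x_0+tv)(v)\,dt,
\]
hence $\Znorm{\gamma(x)}\leq\Znorm{\gamma(x_0)}+(\diam\Omega)\,\supnorm{\gamma'}$. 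Now $\Znorm{\gamma(x_0)}$ is bounded by part (a) for $k=0$ applied to $\gamma$ (or just by $\opnorm{\gamma^{(0)}(x_0)}$, but more carefully one can bound it via the already-established evaluation bound at level $k+1$ combined with integrating back down), and $\supnorm{\gamma'}=\Hnorm{\gamma'}{0}{0}\leq\Hnorm{\gamma'}{k}{0}$ is bounded via the inductive hypothesis (b) for $\gamma'$ as above. Collecting the constants — each depending only on $k$, $\Omega$, $x_0$ (the factor $\diam\Omega$ and the iterated evaluation constants), never on $Z$ or $s$ — yields $\Hnorm{\gamma}{k+1}{0}\leq C(k,\Omega,x_0)\,\Hnorm{\gamma}{k+1}{s}$, which is precisely (b). One should take a little care that the evaluation-at-$x_0$ constant at each level is itself independent of $Z$ and $s$, which holds because at every stage we only ever use $\supnorm{\cdot}\leq\Hnorm{\cdot}{\bullet}{\bullet}$ and the fixed geometric constant $\diam\Omega$.
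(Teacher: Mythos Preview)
Your inductive scheme has a genuine gap at the very start of the step, namely the equation
\[
 \Hnorm{\gamma'}{k}{s}=\HSnorm{\gamma}{k+1}{s}.
\]
For $s\in]0,1]$ (so $(k,s)\neq(0,0)$) the definition gives $\Hnorm{\gamma'}{k}{s}=\supnorm{\gamma'}+\HSnorm{\gamma'}{k}{s}=\supnorm{\gamma'}+\HSnorm{\gamma}{k+1}{s}$, with an extra summand $\supnorm{\gamma'}$. Unwinding the recursion, $\Hnorm{\gamma}{k+1}{s}=\supnorm{\gamma}+\HSnorm{\gamma^{(k+1)}}{0}{s}$ only records $\supnorm{\gamma}$ and the top H\"older seminorm; it does \emph{not} contain $\supnorm{\gamma'}$. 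Consequently, every time you invoke the inductive hypothesis on $\gamma'$ you produce a bound of the form $C\,\Hnorm{\gamma'}{k}{s}=C\bigl(\supnorm{\gamma'}+\HSnorm{\gamma}{k+1}{s}\bigr)$, and the term $\supnorm{\gamma'}$ is precisely what you are still trying to control. Your later attempt to bound $\supnorm{\gamma'}$ via ``inductive hypothesis (b) for $\gamma'$'' is circular for the same reason: that hypothesis only gives $\Hnorm{\gamma'}{k}{0}\leq C\,\Hnorm{\gamma'}{k}{s}$, which again contains $\supnorm{\gamma'}$ on the right.

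The paper breaks this circularity by proving (a) \emph{directly} (not inductively): with $\overline{\oBallin{\epsilon_0}{X}{x_0}}\subseteq\Omega$, Taylor's formula (Proposition~\ref{prop_taylor}(b)) yields $\gamma(x_0+\epsilon_0 v)=T^k_{x_0}\gamma(\epsilon_0 v)+R\gamma(\epsilon_0 v)$, where the remainder is controlled by $\HSnorm{\gamma}{k}{s}$ alone, so $\supnorm{T^k_{x_0}\gamma(\epsilon_0\cdot)}\leq C\,\Hnorm{\gamma}{k}{s}$. Then Proposition~\ref{prop_interpol_pol} extracts the $k$-th homogeneous coefficient, giving $\opnorm{\gamma^{(k)}(x_0)}\leq C\,\Hnorm{\gamma}{k}{s}$. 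Part (b) is then deduced from (a) via $\supnorm{\gamma^{(k)}}\leq\opnorm{\gamma^{(k)}(x_0)}+(\diam\Omega)\,\HSnorm{\gamma}{k}{s}$. If you want to salvage an inductive proof, you must insert an independent argument bounding $\opnorm{\gamma'(x_0)}$ (or $\supnorm{\gamma'}$) by $\supnorm{\gamma}+\HSnorm{\gamma}{k+1}{s}$; the Taylor/interpolation step is exactly that missing ingredient.
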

\begin{proof}
 \newcommand{\en}{\epsilon_0}
 \newcommand{\TAYLOR}{T^k_{x_0}\!\!\gamma(\en v)}
 \newcommand{\REMAINDER}{R\gamma(\en v)}

 For $k=0$ both, (a) and (b) are trivial. So, we may assume $k\geq 1$.

 Before we show (a), we show how (b) follows from (a):
 \begin{align*}
  \Hnorm{\gamma}{k}{0}	  &	=	\supnorm{\gamma} + \HSnorm{\gamma}{k}{0}
			\\&	\leq	\supnorm{\gamma} + \supnorm{\gamma^{(k)}}
			\\&	=   	\supnorm{\gamma} + \sup_{x\in\Omega} \opnorm{\gamma^{(k)}(x)}
			\\&	\leq   	\supnorm{\gamma} 
					+ \sup_{x\in\Omega} 	\opnorm{\gamma^{(k)}(x) - \gamma^{(k)}(x_0)} 
					+ \opnorm{\gamma^{(k)}(x_0)}
			\\&	\leq   	\supnorm{\gamma} 
					+ \HSnorm{\gamma^{(k)}  }{0}{s}\cdot (\diam\Omega)^s
					+ \opnorm{\gamma^{(k)}(x_0)}
			\\&	\leq	\supnorm{\gamma} 
					+ (\diam\Omega)\cdot \HSnorm{\gamma  }{k}{s}
					+ \opnorm{\gamma^{(k)}(x_0)}
 \end{align*}
 The first two summands are obviously continuous with respect to $\Hnorm{\gamma}{k}{s}$ and the continuity of the third summand follows from part (a).

 Now we prove (a):
 Since $\Omega$ is open, there is a constant $\en>0$ such that $\overline{\oBallin{\en}{X}{x_0}}\subseteq \Omega$.
 Let $v\in X$ be a vector with $\Xnorm{v}\leq 1$.
 Since $\gamma\in \Hold{k}{s}{\Omega}{Z}$,
  it is in particular $\FreC^k$ and therefore we may use Taylor's formula (Proposition \ref{prop_taylor} (b) ) and obtain:
\[
 \gamma(x_0+\en v) 	= \TAYLOR + \REMAINDER									\tag{$*$}\label{eqn_Taylor}
\]
 with
\[
 \TAYLOR		= \sum_{j\leq k} \frac{	\gamma^{(j)}(x_0)(v,\ldots,v)\en^j	}{j!}
\]
and
\[
 \REMAINDER		= \int_0^1 \frac{(1-t)^{k-1}}{(k-1)!} \left(  \gamma^{(k)}(x_0+t\en v) -\gamma^{(k)}(x_0)\right) (v,\ldots,v)\en^k \ dt.
\]
First, we will look at the remainder part $\REMAINDER$:
\begin{align*}
 \Znorm{\REMAINDER}	  &	=	\!\Znorm{	\int_0^1\! \frac{(1-t)^{k-1}}{(k-1)!} \!\!\left(  \gamma^{(k)}(x_0+t\en v) -\gamma^{(k)}(x_0)\!\right)\! (v,\ldots,v)\en^k dt	}
 			\\&	\leq	\int_0^1 \frac{1}{(k-1)!} \opnorm{  \gamma^{(k)}(x_0+t\en v) -\gamma^{(k)}(x_0)	} \Xnorm{v}^k \en^k \ dt	
 			\\&	\leq	\int_0^1 \frac{1}{(k-1)!} \HSnorm{\gamma }{k}{s}\cdot \Xnorm{t\en v}^s  \en^k \ dt	
 			\\&	\leq	 \underbrace{\frac{\en^{k+1}}{(k-1)!}}_{=: C_1} \Hnorm{\gamma }{k}{s} .
\end{align*}
This shows that the remainder term is bounded above by a constant (depending only on $k, \Omega$ and $x_0$) times $\Hnorm{\gamma }{k}{s}$.

Now, we estimate the norm of the Taylor-polynomial:
\begin{align*}
 \Znorm{\TAYLOR}	  &	\stackrel{\hbox{(\ref{eqn_Taylor})} }{=}	\Znorm{\gamma(x_0+\en v) - \REMAINDER}
			\\&	\leq	\Znorm{\gamma(x_0+\en v)} + \Znorm{\REMAINDER}
			\\&	\leq	\underbrace{\supnorm{\gamma}}_{\leq \Hnorm{\gamma }{k}{s}} +  C_1\Hnorm{\gamma }{k}{s}
			\\&	\leq	C_2 \Hnorm{\gamma }{k}{s}.
\end{align*}

Since $v\in \overline{\oBallin{1}{X}{0}}$ was arbitrary, this shows that the sup norm of the Taylor polynomial on the closed unit ball is bounded by a constant times $\Hnorm{\gamma }{k}{s}$. By Proposition \ref{prop_interpol_pol} the norm of every homogeneous part is bounded above by the norm of the polynomial:
\[
 \opnorm{\frac{	\gamma^{(j)}(x_0)(\cdot)\en^j	}{j!}} \leq C_3 \Hnorm{\gamma }{k}{s}.
\]
As we saw in Proposition \ref{prop_interpol_pol}, this constant does only depend on $j$ and $k$.

In particular, we have for the case $j=k$: 
\[
 \opnorm{ \gamma^{(k)}(x_0) } \leq C_4 \Hnorm{\gamma }{k}{s}
\]
which is what we had to show.
\end{proof}

\begin{proposition}										\label{prop_incl_general}
 Let $(k,s), (l,t)\in \N_0\times [0,1]$ be given. Assume $k+s < l+t$. Then
\[
 \Hold{l}{t}{\Omega}{Z} \subseteq \Hold{k}{s}{\Omega}{Z}
\]
and the inclusion map is a continuous operator whose norm can be bounded above by a constant depending only on $l$, $X$ and $\Omega$.
\end{proposition}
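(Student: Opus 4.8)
The plan is to realize the inclusion $\Hold{l}{t}{\Omega}{Z}\hookrightarrow\Hold{k}{s}{\Omega}{Z}$ as a composition of finitely many of the inclusion maps already shown to be continuous in Proposition \ref{prop_incl_embed}, Proposition \ref{prop_incl_s} and Lemma \ref{lem_incl_s0}, and then to bound its operator norm by multiplying the (uniform) bounds coming from those results. Fix once and for all a point $x_0\in\Omega$. The basic building block is the following: for every integer $j\geq 1$ and every $r\in[0,1]$ there is a continuous chain
\[
 \Hold{j}{r}{\Omega}{Z}\ \hookrightarrow\ \Hold{j}{0}{\Omega}{Z}\ \hookrightarrow\ \Hold{j-1}{1}{\Omega}{Z},
\]
in which the first arrow is the identity if $r=0$ and is continuous by Lemma \ref{lem_incl_s0}(b) if $r>0$ --- with norm bounded, by the final sentence of that Lemma, by a constant depending only on $j$, $X$, $\Omega$ and the chosen $x_0$, and in particular not on $Z$ or $r$ --- while the second arrow is an isometric embedding by Proposition \ref{prop_incl_embed} (applied with index $j-1$). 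Thus a single integer index can be shaved off at a cost that is uniform in $Z$ and in the \Holder-exponent.

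We now distinguish cases. If $k<l$, we apply this building block $l-k$ times, starting from $\Hold{l}{t}{\Omega}{Z}$ (the successive integer indices $l,l-1,\dots,k+1$ are all $\geq 1$), to obtain a continuous inclusion $\Hold{l}{t}{\Omega}{Z}\hookrightarrow\Hold{k}{1}{\Omega}{Z}$, valid for every $t\in[0,1]$. It then remains to pass from $\Hold{k}{1}{\Omega}{Z}$ to $\Hold{k}{s}{\Omega}{Z}$: if $s=1$ there is nothing to do; if $0<s<1$ we use Proposition \ref{prop_incl_s} with $s_1=s$, $s_2=1$; and if $s=0$ we use Lemma \ref{lem_incl_s0}(b). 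If instead $k=l$, the hypothesis $k+s<l+t$ forces $s<t$, so $t>0$ and $s<1$; here we conclude directly by Proposition \ref{prop_incl_s} (with $s_1=s$, $s_2=t$) if $s>0$, and by Lemma \ref{lem_incl_s0}(b) if $s=0$. In all cases we have produced a continuous linear map $\Hold{l}{t}{\Omega}{Z}\to\Hold{k}{s}{\Omega}{Z}$ which at each stage maps $\gamma$ to $\gamma$, hence is the inclusion; this yields both the asserted set-theoretic containment and its continuity.

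It remains to record the norm bound. The inclusion has been written as a composition of at most $2(l-k)+1\leq 2l+1$ maps, each of which is either an isometric embedding (norm $1$), or one of the maps from Lemma \ref{lem_incl_s0} whose norm is bounded by a constant depending only on its integer index (which is at most $l$), on $X$, on $\Omega$ and on $x_0$, or the map from Proposition \ref{prop_incl_s}, whose norm is at most $\max\{1,(\diam\Omega)^{s_2-s_1}\}\leq\max\{1,\diam\Omega\}$ and so again independent of the exponents. Multiplying these finitely many bounds yields an operator norm for the inclusion which is bounded by a constant depending only on $l$, $X$ and $\Omega$, as required. The one point that genuinely needs attention --- and that dictated the precise shape of the earlier results --- is exactly this uniformity: the final constant must not involve $Z$, $s$ or $t$, which is why Lemma \ref{lem_incl_s0} was formulated with constants independent of $Z$ and $s$ and why Proposition \ref{prop_incl_embed} was strengthened from a mere continuous inclusion to an isometric embedding. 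Beyond this bookkeeping no further difficulty arises.
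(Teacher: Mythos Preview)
Your proof is correct and follows exactly the approach the paper intends: the paper's own proof is a single sentence stating that the result is ``an immediate consequence of Proposition \ref{prop_incl_embed}, Proposition \ref{prop_incl_s} and Lemma \ref{lem_incl_s0}(b),'' and you have simply spelled out the chain of inclusions and the bookkeeping for the uniform constants. Your explicit tracking of the independence of the bounds from $Z$, $s$ and $t$ is precisely the point the paper is relying on.
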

\begin{proof}
 This is a immediate consequence of Proposition \ref{prop_incl_embed}, Proposition \ref{prop_incl_s} and Lemma \ref{lem_incl_s0} (b).
\end{proof}

\subsection{Completeness of the \Holder-Spaces}
\begin{lemma}											\label{lem_Banach}
\newcommand{\Y}{\BoundOpFromTo{X}{Z}}
Let $s\in[0,1]$ and $k\in\N_0$ be given. Then the map
\[
 \Func{\kappa}{\Hold{k+1}{s}{\Omega}{Z}  }{	\Hold{0}{0}{\Omega}{Z} \times \Hold{k}{s}{\Omega}{\Y}	}{\gamma}{(\gamma,\gamma')}
\]
is a topological embedding.
\end{lemma}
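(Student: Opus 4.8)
The map $\kappa(\gamma) = (\gamma,\gamma')$ is obviously linear, injective, and continuous: injectivity is clear from the first component, and continuity follows because $\Hnorm{\gamma}{0}{0} = \supnorm{\gamma} \leq \Hnorm{\gamma}{k+1}{s}$ and $\Hnorm{\gamma'}{k}{s} = \HSnorm{\gamma}{k+1}{s} \leq \Hnorm{\gamma}{k+1}{s}$ by the very definition of the norm on $\Hold{k+1}{s}{\Omega}{Z}$. So the content of the statement is that $\kappa$ is an \emph{embedding}, i.e. that its inverse on the image is continuous; equivalently, that the norm $\Hnorm{\gamma}{k+1}{s}$ is dominated by $\supnorm{\gamma} + \Hnorm{\gamma'}{k}{s}$. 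Since $\Hnorm{\gamma}{k+1}{s} = \supnorm{\gamma} + \HSnorm{\gamma}{k+1}{s}$ and $\HSnorm{\gamma}{k+1}{s} = \HSnorm{\gamma'}{k}{s} \leq \Hnorm{\gamma'}{k}{s}$, this reverse estimate is in fact an \emph{equality up to the obvious split}, not merely an inequality. In other words the real work is not an estimate at all but a \emph{closedness/well-posedness} check: one must verify that the image of $\kappa$ is exactly $\set{(\eta,\zeta)}{\eta \in \FreC^1,\ \eta' = \zeta}$ and that on this set the norm of $\gamma$ is recovered from the pair.

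Concretely I would argue as follows. First, observe that for any $\gamma \in \Hold{k+1}{s}{\Omega}{Z}$ we have the identity of norms $\Hnorm{\gamma}{k+1}{s} = \supnorm{\gamma} + \Hnorm{\gamma'}{k}{s} - \supnorm{\gamma'}$, but more usefully simply the two-sided comparison
\[
 \max\{\supnorm{\gamma},\, \HSnorm{\gamma'}{k}{s}\} \;\leq\; \Hnorm{\gamma}{k+1}{s} \;=\; \supnorm{\gamma} + \HSnorm{\gamma'}{k}{s} \;\leq\; \Hnorm{\gamma}{0}{0} + \Hnorm{\gamma'}{k}{s}.
\]
The right-hand inequality shows $\kappa$ is continuous; the left-hand one, read as $\Hnorm{\gamma}{k+1}{s} \leq \supnorm{\gamma} + \HSnorm{\gamma'}{k}{s} \leq \Hnorm{(\gamma,\gamma')}{\Hold{0}{0}{}\times\Hold{k}{s}{}}$, shows that $\kappa^{-1}$ restricted to the image is continuous. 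Hence $\kappa$ is a homeomorphism onto its image, which is precisely the definition of a topological embedding.

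The only subtlety — and the one point I would be careful to state explicitly — is that this argument uses nothing about the image being closed; a topological embedding need not have closed image, and the lemma as stated asserts only that $\kappa$ is an embedding. So I expect the proof to be genuinely short: unwind the definition $\HSnorm{\gamma}{k+1}{s} := \HSnorm{\gamma'}{k}{s}$, note the elementary inequalities above, and conclude. The one thing worth double-checking is the degenerate case $(k,s) = (0,0)$, where $\Hold{0}{0}{\Omega}{\Y}$ carries the sup-norm and $\HSnorm{\cdot}{0}{0} = \supnorm{\cdot}$, so that $\Hnorm{\gamma}{1}{0} = \supnorm{\gamma} + \supnorm{\gamma'}$ literally equals $\Hnorm{\kappa(\gamma)}{}{}$ — the embedding is then even isometric, consistent with Proposition~\ref{prop_incl_embed}. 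For $(k,s)\neq(0,0)$ one gets a genuine (non-isometric) embedding because of the mismatch between $\HSnorm{\gamma'}{k}{s}$ and $\Hnorm{\gamma'}{k}{s} = \supnorm{\gamma'} + \HSnorm{\gamma'}{k}{s}$, but the two-sided bound still holds. There is no real obstacle here; the lemma is a bookkeeping step that will presumably be combined with completeness of $\Hold{k}{s}{\Omega}{\Y}$ and $\BC{\Omega}{Z}$ (plus closedness of the image, proved separately) to deduce that the \Holder-spaces are Banach.
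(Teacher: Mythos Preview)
Your argument for continuity of $\kappa^{-1}$ on the image is correct and matches the paper: $\Hnorm{\gamma}{k+1}{s} = \supnorm{\gamma} + \HSnorm{\gamma'}{k}{s} \leq \supnorm{\gamma} + \Hnorm{\gamma'}{k}{s} = \norm{\kappa(\gamma)}$.

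However, your argument for continuity of $\kappa$ itself has a genuine gap. You claim that $\Hnorm{\gamma'}{k}{s} = \HSnorm{\gamma}{k+1}{s}$, but this is false whenever $(k,s)\neq(0,0)$: by definition $\HSnorm{\gamma}{k+1}{s} = \HSnorm{\gamma'}{k}{s}$, whereas $\Hnorm{\gamma'}{k}{s} = \supnorm{\gamma'} + \HSnorm{\gamma'}{k}{s}$. So the extra term $\supnorm{\gamma'}$ appears in $\norm{\kappa(\gamma)}$ and must be bounded by a constant times $\Hnorm{\gamma}{k+1}{s} = \supnorm{\gamma} + \HSnorm{\gamma'}{k}{s}$. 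This is \emph{not} a matter of unwinding definitions: knowing that $\gamma$ is bounded and that $\gamma^{(k+1)}$ is $s$-\Holder{} does not immediately bound $\supnorm{\gamma'}$. (Your displayed two-sided chain does not help either; the right-hand inequality there is $\Hnorm{\gamma}{k+1}{s}\leq\norm{\kappa(\gamma)}$, which is the $\kappa^{-1}$ direction again, not the $\kappa$ direction as you assert.)

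The paper fills exactly this gap by invoking Proposition~\ref{prop_incl_general}: the inclusion $\Hold{k+1}{s}{\Omega}{Z}\hookrightarrow\Hold{1}{0}{\Omega}{Z}$ is continuous, so $\Hnorm{\gamma}{1}{0}=\supnorm{\gamma}+\supnorm{\gamma'}$ is controlled by $\Hnorm{\gamma}{k+1}{s}$, and then $\norm{\kappa(\gamma)} = \supnorm{\gamma}+\supnorm{\gamma'}+\HSnorm{\gamma'}{k}{s}\leq \Hnorm{\gamma}{k+1}{s}+\Hnorm{\gamma}{1}{0}$ gives the forward bound. That proposition in turn rests on Lemma~\ref{lem_incl_s0}, which uses Taylor's formula and polynomial interpolation to bound $\opnorm{\gamma^{(j)}(x_0)}$ at a single point---so the missing step is not a triviality. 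You even acknowledge the ``mismatch'' in your last paragraph but then assert without argument that ``the two-sided bound still holds''; that is precisely where the work is.
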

\begin{proof}
 The map $\kappa$ is clearly linear and injective. We show the continuity of $\kappa$ with the following estimate:
 \begin{align*}
  \norm{\kappa(\gamma)}	  &	=	\supnorm{\gamma} + \Hnorm{\gamma'}{k}{s}
			   	=	\supnorm{\gamma} + \HSnorm{\gamma'}{k}{s}
					+ \supnorm{\gamma'}
			\\&	\leq	\Hnorm{\gamma}{k+1}{s} + \Hnorm{\gamma}{1}{0}
 \end{align*}
 By Proposition \ref{prop_incl_general}, $\Hnorm{\cdot}{1}{0}$ is continuous with respect to $\Hnorm{\cdot}{k+1}{s}$. This implies the continuity of $\kappa$.
 
 On the other hand, $\Hnorm{\gamma}{k+1}{s} = \supnorm{\gamma}+\HSnorm{\gamma'}{k}{s} \leq \supnorm{\gamma}+\Hnorm{\gamma'}{k}{s} = \norm{\kappa(\gamma)}$. Hence, $\kappa$ is a topological embedding.
\end{proof}

\begin{proposition}										\label{prop_Banach}
 Let $s\in[0,1]$ and $k\in\N_0$ be given. Then the normed space $\left(\Hold{k}{s}{\Omega}{Z},\Hnorm{\cdot}{k}{s}\right)$ is complete, hence a Banach space.
\end{proposition}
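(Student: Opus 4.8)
The plan is to induct on $k$, using Lemma \ref{lem_Banach} to reduce the case $k+1$ to the case $k$, with the base case $k=0$ handled directly.

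\emph{Base case $k=0$.} For $s=0$ the space $\Hold{0}{0}{\Omega}{Z} = \BC{\Omega}{Z}$ is the space of bounded continuous functions with the sup-norm, which is a standard Banach space (uniform limits of continuous functions are continuous; $Z$ is complete). For $s\in]0,1]$, I would take a Cauchy sequence $\seqn{\gamma_n}$ in $\left(\Hold{0}{s}{\Omega}{Z},\Hnorm{\cdot}{0}{s}\right)$. Since $\supnorm{\cdot}\leq\Hnorm{\cdot}{0}{s}$, it is Cauchy in the sup-norm, hence converges uniformly to some $\gamma\in\BC{\Omega}{Z}$. It remains to check $\gamma\in\Hold{0}{s}{\Omega}{Z}$ and $\Hnorm{\gamma_n-\gamma}{0}{s}\to 0$. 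For fixed distinct $x,y\in\Omega$, the difference quotient $\Znorm{(\gamma_n-\gamma_m)(x)-(\gamma_n-\gamma_m)(y)}/\Xnorm{x-y}^s$ is at most $\HSnorm{\gamma_n-\gamma_m}{0}{s}$, which is small for $n,m$ large uniformly in $x,y$; letting $m\to\infty$ (using pointwise convergence) gives $\Znorm{(\gamma_n-\gamma)(x)-(\gamma_n-\gamma)(y)}/\Xnorm{x-y}^s\leq\epsilon$ for all $n\geq N$ and all $x\neq y$, hence $\HSnorm{\gamma_n-\gamma}{0}{s}\leq\epsilon$. In particular $\gamma = \gamma_N - (\gamma_N-\gamma)$ lies in the space and $\Hnorm{\gamma_n-\gamma}{0}{s}\to 0$.

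\emph{Inductive step.} Suppose the statement holds for some $k\in\N_0$ and all $s\in[0,1]$; fix $s$ and consider $\Hold{k+1}{s}{\Omega}{Z}$. By Lemma \ref{lem_Banach}, the map $\func{\kappa}{}{}{\gamma}{(\gamma,\gamma')}$ is a topological embedding of $\Hold{k+1}{s}{\Omega}{Z}$ into the product $\Hold{0}{0}{\Omega}{Z}\times\Hold{k}{s}{\Omega}{\BoundOpFromTo{X}{Z}}$, which is a Banach space by the base case and the inductive hypothesis (applied with target space $\BoundOpFromTo{X}{Z}$, itself a Banach space). Since a topological embedding carries Cauchy sequences to Cauchy sequences and reflects convergence, it suffices to show that the image of $\kappa$ is closed. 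So let $\seqn{\gamma_n}$ be a sequence in $\Hold{k+1}{s}{\Omega}{Z}$ with $\gamma_n\to\gamma_\infty$ in $\Hold{0}{0}{\Omega}{Z}$ and $\gamma_n'\to\eta$ in $\Hold{k}{s}{\Omega}{\BoundOpFromTo{X}{Z}}$; I must show $\gamma_\infty$ is $\FreC^1$ with $\gamma_\infty' = \eta$, for then $\gamma_\infty\in\Hold{k+1}{s}{\Omega}{Z}$ (as $\eta\in\Hold{k}{s}{\Omega}{\BoundOpFromTo{X}{Z}}$) and $\kappa(\gamma_\infty)=(\gamma_\infty,\eta)$ lies in the image.

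\emph{The main obstacle} is precisely this last point: passing the limit through the derivative. Here I would use the fundamental theorem of calculus in the form already exploited in Proposition \ref{prop_incl_embed}: for $x\in\Omega$ and small $v$ with $[x,x+v]\subseteq\Omega$,
\[
 \gamma_n(x+v)-\gamma_n(x) = \int_0^1 \gamma_n'(x+tv).v\ dt.
\]
Since $\gamma_n'\to\eta$ uniformly (convergence in $\Hold{k}{s}{}$ dominates the sup-norm by Proposition \ref{prop_incl_general}), the right-hand side converges to $\int_0^1 \eta(x+tv).v\ dt$, while the left-hand side converges to $\gamma_\infty(x+v)-\gamma_\infty(x)$; hence
\[
 \gamma_\infty(x+v)-\gamma_\infty(x) = \int_0^1 \eta(x+tv).v\ dt.
\]
Because $\eta$ is continuous, a standard argument (the integrand tends to $\eta(x).v$ as $v\to 0$, uniformly after dividing by $\Xnorm{v}$, exactly as in the proof of Lemma \ref{lem_suff_crit_frec}) shows $\gamma_\infty$ is \Frechet-differentiable at $x$ with $\gamma_\infty'(x)=\eta(x)$; continuity of $\gamma_\infty'=\eta$ is immediate, so $\gamma_\infty$ is $\FreC^1$. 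This closes the image of $\kappa$ and completes the induction.
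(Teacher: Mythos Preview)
Your proof is correct and follows the same overall strategy as the paper: induct on $k$, and in the inductive step use the embedding $\kappa$ of Lemma~\ref{lem_Banach} and show its image is closed by passing to the limit in the integral identity $\gamma_n(x+v)-\gamma_n(x)=\int_0^1 \gamma_n'(x+tv).v\,dt$. The only notable difference is in the base case $k=0$, $s\in]0,1]$: you argue directly with a Cauchy sequence (letting $m\to\infty$ in the H\"older difference quotient), whereas the paper realises $\Hold{0}{s}{\Omega}{Z}$ as a closed subspace of $\BC{\Omega}{Z}\times\BC{U_\Omega}{Z}$ via $\gamma\mapsto(\gamma,R\gamma)$ with $R\gamma(x,y)=\frac{\gamma(x)-\gamma(y)}{\Xnorm{x-y}^s}$, so that the same ``closed image of an embedding'' pattern is used at every stage. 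Your direct argument is a bit shorter; the paper's is more uniform in style. In the inductive step the paper first extracts the directional derivatives to get $\CC^1$ in the Michal--Bastiani sense and then invokes Lemma~\ref{lem_suff_crit_frec}, while you merge these two steps into a direct Fr\'echet estimate --- both routes are valid.
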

\begin{proof}
 For $(k,s)=(0,0)$, this is well known.
 Therefore, let $k=0$ and $s\in]0,1]$.
 \newcommand{\Rest}{U_\Omega}
 \newcommand{\Y}{\BoundOpFromTo{X}{Z}}
 For every $\gamma\in \BC{\Omega}{Z}$, define
\[
 \func{R\gamma}{\Rest}{Z}{(x,y)}{\frac{\gamma(x)-\gamma(y)}{\Xnorm{x-y}^s} }
\]
 Here, $\Rest:=\set{(x,y)\in\Omega\times\Omega}{x\neq y}$ denotes the complement of the diagonal in $\Omega\times \Omega$.
 
 Now, it is clear that $\Hold{0}{s}{\Omega}{Z}:= \set{\gamma\in \BC{\Omega}{Z}	}{R\gamma\in \BC{\Rest}{Z}	}$ and that
\[
 \Func{\iota}{	\Hold{0}{s}{\Omega}{Z}	}{	\BC{\Omega}{Z}\times \BC{\Rest}{Z}	}{	\gamma	}{	(\gamma,R\gamma)	}
\]
 is an isometric embedding. Therefore it remains to show that the image of $\iota$ is closed in the product of the two Banach spaces $\BC{\Omega}{Z}\times \BC{\Rest}{Z}$.

Now, let $(\gamma, \eta)$ be in the closure of the image of $\iota$. This implies that there is a sequence $\seqn{\gamma_n}$ in the space $\Hold{0}{s}{\Omega}{Z}$ such that
$\seqn{\gamma_n}$ converges uniformly to $\gamma \in \BC{\Omega}{Z}$ and that $\seqn{R\gamma_n}$ converges uniformly to \hbox{$\eta\in \BC{\Rest}{Z}$.}
In particular, we have pointwise convergence, hence the following holds for all $(x,y)\in \Rest$:
\[
 \eta(x,y) = \lim_{n\rightarrow\infty} \frac{\gamma_n(x)-\gamma_n(y)}{\Xnorm{x-y}^s}
\]
But the right hand side converges pointwise to $\frac{\gamma(x)-\gamma(y)}{\Xnorm{x-y}^s}$ since $\seqn{\gamma_n}$ converges to $\gamma$. Therefore $\eta=R\gamma$ and therefore the image of $\iota$ is closed and $\Hold{0}{s}{\Omega}{Z}$ is a Banach space.

Now, we will show the claim for $(k+1,s)$ and by an induction argument, we may assume that it holds for $(k,s)\in \N_0\times[0,1]$. We will use the topological embedding from Lemma \ref{lem_Banach}:
\[
 \Func{\kappa}{	\Hold{k+1}{s}{\Omega}{Z}	}{	\BC{\Omega}{Z}\times \Hold{k}{s}{\Omega}{\Y }	}{	\gamma	}{	(\gamma,\gamma')	}
\]
 So, again it suffices to show that the image of $\kappa$ is closed in the Banach space $\BC{\Omega}{Z}\times \Hold{k}{s}{\Omega}{\Y }$ which by induction hypothesis is a product of two Banach spaces.

Now, let $(\gamma, \eta)$ be in the closure of the image of $\kappa$. This implies that there is a sequence $\seqn{\gamma_n}$ in the space $\Hold{k+1}{s}{\Omega}{Z}$ such that
$\seqn{\gamma_n}$ converges to $\gamma$ in $\BC{\Omega}{Z}$ and that $\seqn{\gamma_n'}$ converges to $\eta\in \Hold{k}{s}{\Omega}{\Y}$.
We have to show that $\gamma\in \Hold{k+1}{s}{\Omega}{Z}$ and that $\gamma'=\eta$.

Therefore let $x_0\in \Omega$ and $v\in X$ be given. Since $\Omega$ is convex, we can write the difference quotient of $\gamma_n$ at point $x_0\in \Omega$ in direction $v\in X$ as:
\[
 \frac{1}{t}\left( \gamma_n(x_0+tv)-\gamma_n(x_0)	\right) = \int_0^1 \gamma_n'(x_0+stv).v  ds
\]
if $\abs{t}$ is small enough. Now, we take the pointwise limit as $n\rightarrow \infty$ and obtain:
\[
 \frac{1}{t}\left( \gamma(x_0+tv)-\gamma(x_0)	\right) = \int_0^1 \eta(x_0+stv).v  ds
\]
For the convergence of the integral, we use that $\supnorm{\gamma_n'-\eta}\rightarrow 0$.

If we let now $t$ tend to $0$, then the right hand side converges to $\eta(x_0).v$.
So, we have shown that the directional derivative of $\gamma$ at point $x_0$ in direction $v$ exists and is equal to $\eta(x_0).v$.
Since $v\in X$ was arbitrary, all directional derivatives exist and we have just seen that the map
\[
 \func{d\gamma}{\Omega\times X}{Z}{(x,v)}{\eta(x).v}
\]
is continuous, therefore $\gamma$ is $\CC^1$ the Michal-Bastiani sense. But since
\[
 \gamma'(x)=d\gamma(x,\cdot)=\eta(x)
\]
and $\smfunc{\eta}{\Omega}{\Y}$ is continuous by hypothesis, we can apply Lemma \ref{lem_suff_crit_frec} and obtain that  $\gamma$ is $\FreC^1$.
Since $\gamma'=\eta\in \Hold{k}{s}{\Omega}{\Y }$, this implies that $\gamma\in \Hold{k+1}{s}{\Omega}{Z}$ which finishes the proof.
\end{proof}

\subsection{Products of \Holder-Continuous Functions}

\begin{theorem}[Products of \Holder-Continuous Functions]												\label{thm_small}
We assume that $\diam\Omega\leq 1$. Let $\smfunc{\bullet}{Z_1\times Z_2}{Z}$ be a continuous bilinear map. We define the pointwise product of two functions $\gamma_1\in \Hold{k}{s}{\Omega}{Z_1}$ and $\gamma_2\in \Hold{k}{s}{\Omega}{Z_2}$ as
\[
 \func{\gamma_1 \bullet \gamma_2}{\Omega}{Z}{x}{\gamma_1(x)\bullet\gamma_2(x)}.
\]
Then the product is again in $\Hold{k}{s}{\Omega}{Z}$ and we have the following formula:
\[
 \Hnorm{\gamma_1\bullet \gamma_2}{k}{s}\leq C_k \cdot \opnorm{\bullet}\cdot   \Hnorm{\gamma_1}{k}{s}\Hnorm{\gamma_2}{k}{s}
\]
Here, the $C_k>0$ is a constant, depending only on $k$, but not on $s$ or on $\bullet$. This will important later on.
\end{theorem}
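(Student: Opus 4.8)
The plan is to prove the statement by induction on $k$, carried out simultaneously for all Banach spaces $Z_1,Z_2,Z$ and all continuous bilinear maps $\bullet$, so that the constant $C_k$ depends only on $k$ (and on $X,\Omega$, which are fixed throughout this section), not on $s$, on $\bullet$, or on the target spaces. For the base case $k=0$ with $s=0$ the assertion is simply $\supnorm{\gamma_1\bullet\gamma_2}\leq\opnorm{\bullet}\supnorm{\gamma_1}\supnorm{\gamma_2}$, so $C_0=1$. For $s\in\,]0,1]$ one starts from the Leibniz-type identity
\[
 \gamma_1(x)\bullet\gamma_2(x)-\gamma_1(y)\bullet\gamma_2(y)=\bigl(\gamma_1(x)-\gamma_1(y)\bigr)\bullet\gamma_2(x)+\gamma_1(y)\bullet\bigl(\gamma_2(x)-\gamma_2(y)\bigr),
\]
takes norms, divides by $\Xnorm{x-y}^s$, and passes to the supremum over $x\neq y$ to get $\HSnorm{\gamma_1\bullet\gamma_2}{0}{s}\leq\opnorm{\bullet}\bigl(\HSnorm{\gamma_1}{0}{s}\supnorm{\gamma_2}+\supnorm{\gamma_1}\HSnorm{\gamma_2}{0}{s}\bigr)$; adding the sup-norm estimate yields $\Hnorm{\gamma_1\bullet\gamma_2}{0}{s}\leq\opnorm{\bullet}\Hnorm{\gamma_1}{0}{s}\Hnorm{\gamma_2}{0}{s}$, again with $C_0=1$.

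For the inductive step, assume the claim holds for $k$. Given $\gamma_i\in\Hold{k+1}{s}{\Omega}{Z_i}$, the chain rule together with the bilinearity of $\bullet$ shows that $\gamma_1\bullet\gamma_2=\bullet\circ(\gamma_1,\gamma_2)$ is $\FreC^1$ with
\[
 (\gamma_1\bullet\gamma_2)'=\gamma_1'\bullet_1\gamma_2+\gamma_1\bullet_2\gamma_2',
\]
where $\bullet_1\colon\BoundOpFromTo{X}{Z_1}\times Z_2\to\BoundOpFromTo{X}{Z}$, $(T,z)\mapsto\bigl(v\mapsto(Tv)\bullet z\bigr)$ and $\bullet_2\colon Z_1\times\BoundOpFromTo{X}{Z_2}\to\BoundOpFromTo{X}{Z}$, $(z,S)\mapsto\bigl(v\mapsto z\bullet(Sv)\bigr)$ are continuous bilinear maps with $\opnorm{\bullet_1},\opnorm{\bullet_2}\leq\opnorm{\bullet}$. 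Since $\gamma_1'\in\Hold{k}{s}{\Omega}{\BoundOpFromTo{X}{Z_1}}$ by definition and $\gamma_2\in\Hold{k}{s}{\Omega}{Z_2}$ by Proposition~\ref{prop_incl_general} (and symmetrically for the second summand), the induction hypothesis applied to $\bullet_1$ and $\bullet_2$ shows that both summands, hence also $(\gamma_1\bullet\gamma_2)'$, lie in $\Hold{k}{s}{\Omega}{\BoundOpFromTo{X}{Z}}$; therefore $\gamma_1\bullet\gamma_2\in\Hold{k+1}{s}{\Omega}{Z}$ by definition. This settles membership.

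For the norm estimate one combines $\supnorm{\gamma_1\bullet\gamma_2}\leq\opnorm{\bullet}\Hnorm{\gamma_1}{k+1}{s}\Hnorm{\gamma_2}{k+1}{s}$ with $\HSnorm{\gamma_1\bullet\gamma_2}{k+1}{s}=\HSnorm{(\gamma_1\bullet\gamma_2)'}{k}{s}\leq\Hnorm{\gamma_1'\bullet_1\gamma_2}{k}{s}+\Hnorm{\gamma_1\bullet_2\gamma_2'}{k}{s}$, and bounds each summand via the induction hypothesis, e.g.\ $\Hnorm{\gamma_1'\bullet_1\gamma_2}{k}{s}\leq C_k\opnorm{\bullet}\Hnorm{\gamma_1'}{k}{s}\Hnorm{\gamma_2}{k}{s}$. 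To conclude it then suffices to bound $\Hnorm{\gamma_2}{k}{s}$ by $\Hnorm{\gamma_2}{k+1}{s}$ (Proposition~\ref{prop_incl_general}) and $\Hnorm{\gamma_1'}{k}{s}$ by $\Hnorm{\gamma_1}{k+1}{s}$, in both cases up to a constant depending only on $k,X,\Omega$; substituting these back produces $\Hnorm{\gamma_1\bullet\gamma_2}{k+1}{s}\leq C_{k+1}\opnorm{\bullet}\Hnorm{\gamma_1}{k+1}{s}\Hnorm{\gamma_2}{k+1}{s}$ with $C_{k+1}$ built only from $C_k$ and these inclusion constants, hence still depending only on $k,X,\Omega$.

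The estimate $\Hnorm{\gamma_1'}{k}{s}\leq\mathrm{const}\cdot\Hnorm{\gamma_1}{k+1}{s}$ --- that is, the boundedness of the differentiation operator $\Hold{k+1}{s}{\Omega}{Z}\to\Hold{k}{s}{\Omega}{\BoundOpFromTo{X}{Z}}$, $\gamma\mapsto\gamma'$ --- is the one step requiring genuine care, and is the expected main obstacle. Since $\HSnorm{\gamma'}{k}{s}=\HSnorm{\gamma}{k+1}{s}\leq\Hnorm{\gamma}{k+1}{s}$, only $\supnorm{\gamma'}$ has to be controlled; fixing $x_0\in\Omega$ and writing $\opnorm{\gamma'(x)}\leq\opnorm{\gamma'(x_0)}+\opnorm{\gamma'(x)-\gamma'(x_0)}$, one bounds $\opnorm{\gamma'(x_0)}$ by means of Lemma~\ref{lem_incl_s0}(a) and the oscillation term by a \Holder-seminorm of $\gamma'$ --- recursively reducing to the top derivative $\gamma^{(k+1)}$, whose oscillation is $\HSnorm{\gamma}{k+1}{s}\cdot\Xnorm{x-x_0}^s$ --- so that everything is dominated by $\Hnorm{\gamma}{k+1}{s}$. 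The hypothesis $\diam\Omega\leq1$ enters here precisely to make the powers $(\diam\Omega)^s$ that occur be $\leq1$ uniformly in $s$, which, together with the $s$- and $Z$-independence of the constants in Lemma~\ref{lem_incl_s0} and Proposition~\ref{prop_incl_general}, is what keeps $C_k$ depending on nothing but $k$; a short separate argument deals with $s=0$.
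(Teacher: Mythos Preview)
Your proposal is correct and follows essentially the same route as the paper: induction on $k$, the Leibniz identity for the base case, and the product rule together with the auxiliary bilinear maps $\bullet_1,\bullet_2$ for the inductive step, invoking Proposition~\ref{prop_incl_general} to pass between $\Hnorm{\cdot}{k}{s}$ and $\Hnorm{\cdot}{k+1}{s}$. Two minor differences are worth noting: your base-case bookkeeping yields the sharper constant $C_0=1$ (the paper settles for $C_0=2$), and you are more explicit than the paper about why $\Hnorm{\gamma'}{k}{s}$ is controlled by $\Hnorm{\gamma}{k+1}{s}$ with an $s$-independent constant --- the paper's chain of inequalities at that point is a bit terse, whereas your reduction via Lemma~\ref{lem_incl_s0}(a) (or, equivalently, via the embedding in Lemma~\ref{lem_Banach}) makes the step transparent.
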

\begin{proof}
 By replacing the continuous bilinear map $\bullet$ by its multiple $\frac{1}{\opnorm{\bullet}}\bullet$, we may assume that $\opnorm{\bullet}=1$.
 
 The claim is trivial for $(k,s)=(0,0)$. The case $k=0$ and $s\in ]0,1]$ is done in the following way:
\begin{align*}
 \Znorm{\gamma_1\bullet\gamma_2(x)\!-\! \gamma_1\bullet\gamma_2(y)}\!	  &	\leq	\!  \Znorm{\gamma_1(x)\bullet\gamma_2(x)- \gamma_1(x)\bullet\gamma_2(y)}
									\\&	\quad 	+   \Znorm{\gamma_1(x)\bullet\gamma_2(y)- \gamma_1(y)\bullet\gamma_2(y)}
									\\&	\leq	\!    \Znorm{\gamma_1(x)} \Znorm{\gamma_2(x)-\gamma_2(y)}
									\\&	\quad	+   \Znorm{\gamma_1(x)-\gamma_1(y)}\Znorm{\gamma_2(y)}
									\\&	\leq	\!    \left(\supnorm{\gamma_1}\HSnorm{\gamma_2}{0}{s}
										+	\HSnorm{\gamma_1}{0}{s}\supnorm{\gamma_2}\right)\!\Xnorm{x-y}^s.
\end{align*}
Therefore we have
\[
 \HSnorm{\gamma_1\bullet\gamma_2}{0}{s} \leq \supnorm{\gamma_1}\HSnorm{\gamma_2}{0}{s}	+	\HSnorm{\gamma_1}{0}{s}\supnorm{\gamma_2}
\]
Now we add the inequality $\supnorm{\gamma_1\bullet\gamma_2}\leq \supnorm{\gamma_1}\supnorm{\gamma_2}$ on both sides:
\[
 \Hnorm{\gamma_1\bullet\gamma_2}{0}{s} \leq 	\underbrace{  \supnorm{\gamma_1}\HSnorm{\gamma_2}{0}{s}  }_{\leq \Hnorm{\gamma_1}{0}{s}\Hnorm{\gamma_2}{0}{s}}
			+	\underbrace{  \HSnorm{\gamma_1}{0}{s}\supnorm{\gamma_2} + \supnorm{\gamma_1}\supnorm{\gamma_2}  }_{= \Hnorm{\gamma_1}{0}{s}\Hnorm{\gamma_2}{0}{s} }. 
\]
This proves the claim for $k=0$ and $s\in [0,1]$ for the constant $C_0:=2$. Now assume the claim holds for $k$. We will show it for $k+1$.

 \newcommand{\Y}[1]{\BoundOpFromTo{X}{Z_{#1}} }
Therefore, we are given $\gamma_1 \in \Hold{k+1}{s}{\Omega}{Z_1}$ and $\gamma_2 \in \Hold{k+1}{s}{\Omega}{Z_2}$. By definition, this means that $\gamma_1$ and $\gamma_2$ are $\FreC^1$ and 
\[
 \gamma_1'\in \Hold{k}{s}{\Omega}{\BoundOpFromTo{X}{Z_1}} \quad \hbox{ and } \quad \gamma_2'\in \Hold{k}{s}{\Omega}{\BoundOpFromTo{X}{Z_2}}.
\]
Now we define the following bilinear operators:
\[
 \Func{*_1}{ Z_1\times\Y{2} }{\Y{} }{(z,T)}{ \left(	x\mapsto z\bullet(Tx)	\right) }
\]
and
\[
 \Func{*_2}{ \Y{1}\times Z_2 }{\Y{} }{(T,z)}{ \left(	x\mapsto (Tx)\bullet x	\right). }
\]
It is easy to verify that $\opnorm{*_1},\opnorm{*_2}\leq 1$. Therefore, we may use the induction hypothesis and obtain that $\gamma_1  *_1  \gamma_2'$ and $\gamma_1' *_2  \gamma_2$ belong to $\Hold{k}{s}{\Omega}{\Y{}}$ and we have the following estimates:
\[
 \Hnorm{\gamma_1  *_1  \gamma_2'}{k}{s}\leq C_k\Hnorm{\gamma_1}{k}{s}\Hnorm{\gamma_2'}{k}{s}
\]
and
\[
 \Hnorm{\gamma_1' *_2  \gamma_2}{k}{s}\leq C_k\Hnorm{\gamma_1'}{k}{s}\Hnorm{\gamma_2}{k}{s}
\]
 By the product rule for \Frechet-derivatives, we know that 
\[
 (\gamma_1\bullet\gamma_2)'=\gamma_1  *_1  \gamma_2' + \gamma_1' *_2  \gamma_2.
\]
And hence $(\gamma_1\bullet\gamma_2)'\in\Hold{k}{s}{\Omega}{\Y{}}$ which implies $\gamma_1\bullet\gamma_2\in\Hold{k+1}{s}{\Omega}{Z}$.

It remains to show the norm estimate:
\begin{align*}
 \HSnorm{\gamma_1\bullet\gamma_2}{k+1}{s}	  &	=	\HSnorm{(\gamma_1\bullet\gamma_2)'}{k}{s}
 							\leq	\HSnorm{\gamma_1  *_1  \gamma_2'}{k}{s} + \HSnorm{\gamma_1' *_2  \gamma_2}{k}{s}
 						\\&	\leq	\Hnorm{\gamma_1  *_1  \gamma_2'}{k}{s} + \Hnorm{\gamma_1' *_2  \gamma_2}{k}{s}
 						\\&	\leq	C_k\left(\Hnorm{\gamma_1}{k}{s}\Hnorm{\gamma_2'}{k}{s} + \Hnorm{\gamma_1'}{k}{s} \Hnorm{\gamma_2}{k}{s}\right)
 						\\&	\leq	C_k\left(\Hnorm{\gamma_1}{k}{s}2\HSnorm{\gamma_2}{k+1}{s} + 2\HSnorm{\gamma_1}{k+1}{s} \Hnorm{\gamma_2}{k}{s}\right)
 						\\&	\leq	C_k\left(D_k\Hnorm{\gamma_1}{k+1}{s}2\Hnorm{\gamma_2}{k+1}{s} + 2\Hnorm{\gamma_1}{k+1}{s} \Hnorm{\gamma_2}{k+1}{s}\right)
 						\\&	=   	\underbrace{(2D_k+2)C_k}_{C_{k+1} := }\Hnorm{\gamma_1}{k+1}{s}\Hnorm{\gamma_2}{k+1}{s}.
\end{align*}
 Here $D_k$ is an upper bound for the norm of the inclusion $\smnnfunc{ \Hold{k+1}{s}{\Omega}{Z} }{ \Hold{k}{s}{\Omega}{Z} }$, independent of $Z$ and $s$, which exists by \ref{prop_incl_general}.
This finishes the proof.
\end{proof}

\subsection{Directed Unions of \Holder-Spaces}
From now on, we will assume that $\diam\Omega\leq 1$. By Proposition \ref{prop_incl_s}, this implies that for a fixed $k\in\N_0$ and $0<s_1<s_2\leq 1$ the inclusion map
\[
 \nnsmfunc{\Hold{k}{s_2}{\Omega}{Z} }{ \Hold{k}{s_1}{\Omega}{Z} }
\]
is continuous with operator norm at most $1$.
\begin{proposition}[Logarithmic Convexity Property for $k=0$]							\label{prop_log_conv}
$\phantom{M}$ 
\begin{itemize}
 \item [(a)] Let $0< s<u \leq 1$. Assume $\gamma\in \Hold{0}{u}{\Omega}{Z}$ and let $\lambda\in ]0,1[$. Then we have
\[
 \HSnorm{\gamma}{0}{\lambda s + (1-\lambda)u} \leq \left(\HSnorm{\gamma}{0}{s}\right)^\lambda\cdot\left(\HSnorm{\gamma}{0}{u}\right)^{1-\lambda}.
\]
 \item [(b)] Let $0\leq s<u \leq 1$. Assume $\gamma\in \Hold{0}{u}{\Omega}{Z}$ and let $\lambda\in ]0,1[$. Then we have
\[
 \Hnorm{\gamma}{0}{\lambda s + (1-\lambda)u} \leq 2\left(\Hnorm{\gamma}{0}{s}\right)^\lambda\cdot\left(\Hnorm{\gamma}{0}{u}\right)^{1-\lambda}.
\]
\end{itemize}
\end{proposition}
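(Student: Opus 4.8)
Both parts of the proposition come down to the single fact its title alludes to: for a \emph{fixed} pair of distinct points $x,y\in\Omega$ the quantity $\sigma\mapsto\Znorm{\gamma(x)-\gamma(y)}/\Xnorm{x-y}^{\sigma}$ has logarithm $\log\Znorm{\gamma(x)-\gamma(y)}-\sigma\log\Xnorm{x-y}$, which is \emph{affine} in the exponent $\sigma$; hence the supremum of these quantities over all such pairs, namely $\sigma\mapsto\HSnorm{\gamma}{0}{\sigma}$, is log-convex. To prove (a) I would make this explicit: set $w:=\lambda s+(1-\lambda)u$, fix $x\neq y$, abbreviate $r:=\Xnorm{x-y}$ and $N:=\Znorm{\gamma(x)-\gamma(y)}$, and use the algebraic identity
\[
 \frac{N}{r^{w}}=\left(\frac{N}{r^{s}}\right)^{\lambda}\left(\frac{N}{r^{u}}\right)^{1-\lambda},
\]
which holds precisely because $\lambda s+(1-\lambda)u=w$. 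Since $\gamma\in\Hold{0}{u}{\Omega}{Z}$ also belongs to $\Hold{0}{s}{\Omega}{Z}$ by Proposition \ref{prop_incl_s}, the two factors on the right are bounded by $\HSnorm{\gamma}{0}{s}^{\lambda}$ and $\HSnorm{\gamma}{0}{u}^{1-\lambda}$; taking the supremum over all $x\neq y$ yields (a). There is no real obstacle here --- it is one line of arithmetic with exponents.

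For (b) I would use that $w=\lambda s+(1-\lambda)u\geq(1-\lambda)u>0$, so that $\Hnorm{\gamma}{0}{w}=\supnorm{\gamma}+\HSnorm{\gamma}{0}{w}$, and then bound the two summands separately against $\Hnorm{\gamma}{0}{s}^{\lambda}\Hnorm{\gamma}{0}{u}^{1-\lambda}$. The sup-norm summand is immediate: $\supnorm{\gamma}=\supnorm{\gamma}^{\lambda}\supnorm{\gamma}^{1-\lambda}\leq\Hnorm{\gamma}{0}{s}^{\lambda}\Hnorm{\gamma}{0}{u}^{1-\lambda}$, using $\supnorm{\gamma}\leq\Hnorm{\gamma}{0}{s}$ and $\supnorm{\gamma}\leq\Hnorm{\gamma}{0}{u}$ (valid at $s=0$ too, where $\Hnorm{\gamma}{0}{0}=\supnorm{\gamma}$). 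For the seminorm summand, when $s>0$ I would invoke part (a) and then enlarge the two \Holder{} seminorms on the right to the corresponding full norms, obtaining $\HSnorm{\gamma}{0}{w}\leq\Hnorm{\gamma}{0}{s}^{\lambda}\Hnorm{\gamma}{0}{u}^{1-\lambda}$; adding the two estimates produces the factor $2$. When $s=0$ I would re-run the identity above with $r^{0}=1$ and the triangle-inequality bound $N\leq2\supnorm{\gamma}$, obtaining $\HSnorm{\gamma}{0}{w}\leq2^{\lambda}\supnorm{\gamma}^{\lambda}\HSnorm{\gamma}{0}{u}^{1-\lambda}$, and then add.

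The one delicate point --- and the step I would treat with care --- is the endpoint $s=0$. There the convention $\Hnorm{\cdot}{0}{0}:=\supnorm{\cdot}$ (rather than the larger oscillation $\sup_{x\neq y}\Znorm{\gamma(x)-\gamma(y)}$) forces the crude bound $\Znorm{\gamma(x)-\gamma(y)}\leq2\supnorm{\gamma}$, so that assembling the two summands most naturally gives a constant of the shape $1+2^{\lambda}\leq3$ rather than exactly $2$. The value $2$ is what the argument delivers for $s>0$; for the degenerate exponent $s=0$ one should read the statement with an absolute constant (say $3$ in place of $2$), which is all that is needed in the sequel. Apart from this bookkeeping of numerical constants, the whole proposition is just \Holder's inequality applied in the exponent variable, so I expect nothing deeper to be involved.
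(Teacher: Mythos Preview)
Your approach coincides with the paper's: for (a) you factor $N/r^{w}=(N/r^{s})^{\lambda}(N/r^{u})^{1-\lambda}$ and take the supremum, which is exactly the paper's computation; for (b) with $s>0$ you split $\Hnorm{\gamma}{0}{w}=\supnorm{\gamma}+\HSnorm{\gamma}{0}{w}$ and bound each summand separately by $\Hnorm{\gamma}{0}{s}^{\lambda}\Hnorm{\gamma}{0}{u}^{1-\lambda}$, again matching the paper line for line.

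Your caution at the endpoint $s=0$ is well placed and in fact sharper than the paper. The paper does \emph{not} treat $s=0$ separately: it writes the same chain of inequalities, implicitly invoking (a) with the convention $\HSnorm{\cdot}{0}{0}:=\supnorm{\cdot}$, which would require $\Znorm{\gamma(x)-\gamma(y)}\leq\supnorm{\gamma}$ rather than the correct $\leq 2\supnorm{\gamma}$. So the extra factor $2^{\lambda}$ you noticed is silently absorbed (or overlooked) in the paper's argument. Since only an absolute constant independent of $s,u,\lambda,Z$ is used downstream (in Proposition~\ref{prop_holder_limit} the case $s=0$ does not even arise), this is harmless, and your reading of the situation is accurate.
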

\begin{proof}
(a) We may estimate:
 \begin{align*}
  \frac{\Znorm{\gamma(x)-\gamma(y)} }{\quad\quad\quad \Xnorm{x-y}^{\lambda s + (1-\lambda)u}}
						  &	=	\frac{ \Znorm{\gamma(x)-\gamma(y)}^{\lambda}\cdot \Znorm{\gamma(x)-\gamma(y)}^{1-\lambda}  }{\Xnorm{x-y}^{\lambda s}\cdot \Xnorm{x-y}^{(1-\lambda)u}}
						\\&	=	{\underbrace{\left( \frac{\Znorm{\gamma(x)-\gamma(y)} }{\Xnorm{x-y}^s}	\right)}_{\leq \HSnorm{\gamma}{0}{s}}}^\lambda
								\cdot
								{\underbrace{\left( \frac{\Znorm{\gamma(x)-\gamma(y)} }{\Xnorm{x-y}^u}	\right)}_{\leq \HSnorm{\gamma}{0}{u}}}^{1-\lambda}.
 \end{align*}
 This shows (a).

(b)
 \renewcommand{\tt}{\lambda s + (1-\lambda)u}
 \begin{align*}
  \Hnorm{\gamma}{0}{\tt}	  &	=	\supnorm{\gamma} + \HSnorm{\gamma}{0}{\tt}
				\\&	\leq	\supnorm{\gamma}^\lambda\cdot\supnorm{\gamma}^{1-\lambda}
					+	\left(\HSnorm{\gamma}{0}{s}\right)^\lambda\cdot\left(\HSnorm{\gamma}{0}{u}\right)^{1-\lambda}
				\\&	\leq	\Hnorm{\gamma}{0}{s}^\lambda\cdot\Hnorm{\gamma}{0}{u}^{1-\lambda}
					+	\Hnorm{\gamma}{0}{s}^\lambda\cdot\Hnorm{\gamma}{0}{u}^{1-\lambda}.	\qedhere
 \end{align*}
\end{proof}

\begin{proposition}								\label{prop_holder_limit}
 Let $(k,s_0)\in \N_0\times [0,1[$ be given. Then the direct limit space
\[
 \Hold{k}{>s_0}{\Omega}{Z} := \bigcup_{t\in]s_0,1]} \Hold{k}{t}{\Omega}{Z}
\]
 is Hausdorff and compactly regular.
\end{proposition}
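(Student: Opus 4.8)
The statement to prove is that the directed union $\Hold{k}{>s_0}{\Omega}{Z} = \bigcup_{t \in ]s_0,1]} \Hold{k}{t}{\Omega}{Z}$, equipped with the locally convex direct limit topology, is Hausdorff and compactly regular. My plan is to exhibit this union as a countable direct limit of Banach spaces with well-behaved inclusion maps and then invoke standard (LB)-space theory, in particular a Mittag-Leffler-type criterion for compact regularity.

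First I would reduce from the uncountable index set $]s_0,1]$ to a countable one. Pick any strictly decreasing sequence $(t_n)_{n\in\N}$ in $]s_0,1]$ with $t_n \to s_0$; then $\Hold{k}{>s_0}{\Omega}{Z} = \bigcup_{n\in\N} \Hold{k}{t_n}{\Omega}{Z}$, and by cofinality the direct limit topology is the same whether taken over $]s_0,1]$ or over the sequence $(t_n)$. So the space is an (LB)-space, the locally convex inductive limit of the Banach spaces $E_n := \Hold{k}{t_n}{\Omega}{Z}$. By Proposition \ref{prop_incl_general} (or Proposition \ref{prop_incl_s}, since $\diam\Omega \le 1$) each inclusion $E_{n+1} \hookrightarrow E_n$ is continuous with operator norm at most $1$; in particular the linking maps are continuous and injective, so the inductive system is reduced.

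Second, for Hausdorffness and compact regularity I would verify the hypotheses of a known criterion for (LB)-spaces --- namely, it suffices that the inductive limit be \emph{regular} (every bounded set is contained and bounded in some step $E_n$), and regularity together with the Banach (hence complete, webbed) structure of each step yields both that the limit is Hausdorff and that it is compactly regular (every compact subset is contained in some $E_n$ and compact there). The key estimate making regularity work is the logarithmic convexity property, Proposition \ref{prop_log_conv}: if $B$ is bounded in the direct limit, one shows $B$ is bounded in some $\Hold{k}{t}{\Omega}{Z}$ by choosing $t$ strictly between $s_0$ and the relevant $t_n$, writing $t = \lambda s_0' + (1-\lambda) t_n$ with $s_0 < s_0' < t < t_n$, and using the interpolation inequality to bound the $\Hnorm{\cdot}{k}{t}$-norm of elements of $B$ by a product of powers of their $\Hnorm{\cdot}{k}{t_n}$-norm (bounded, since $B \subseteq E_n$ boundedly) and their $\Hnorm{\cdot}{k}{s_0'}$-norm (controlled via the continuous inclusion into $\Hold{k}{s_0'}{\Omega}{Z}$ and the boundedness of $B$ in the weaker topology, which follows from boundedness in the direct limit). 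One must be slightly careful that Proposition \ref{prop_log_conv} as stated is only for $k=0$; for general $k$ one applies it to the $k$-th \Frechet{} derivatives $\gamma^{(k)}$ together with the sup-norm bound on $\gamma$ and the lower derivatives, exactly as in the definition of $\Hnorm{\cdot}{k}{s}$ --- this propagation to higher $k$ is essentially bookkeeping using the recursive definition of the \Holder-norms.

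The main obstacle, I expect, is establishing \emph{regularity} of the (LB)-space --- i.e.\ showing every bounded set sits boundedly in some step --- and this is precisely where the logarithmic convexity interpolation inequality does the essential work; once regularity is in hand, Hausdorffness and compact regularity follow from the general theory of regular (LB)-spaces built from Banach steps (each step being complete and the system reduced). A secondary technical point is confirming that a bounded subset of the direct limit is automatically bounded in the weaker space $\Hold{k}{s_0'}{\Omega}{Z}$ for $s_0' > s_0$; this holds because the inclusion of each step into $\Hold{k}{s_0'}{\Omega}{Z}$ is continuous, so the inclusion of the whole direct limit into $\Hold{k}{s_0'}{\Omega}{Z}$ is continuous, and continuous linear maps send bounded sets to bounded sets.
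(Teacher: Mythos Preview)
Your plan has the right ingredients --- the reduction to a countable cofinal sequence and the logarithmic convexity interpolation are exactly the tools the paper uses --- but the way you assemble them into a proof of regularity is circular, and this is a genuine gap.

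You propose to show that a bounded set $B$ in the direct limit is bounded in some step by interpolating between its $\Hnorm{\cdot}{k}{t_n}$-norm and its $\Hnorm{\cdot}{k}{s_0'}$-norm. But you justify the finiteness of the first factor by saying ``$B \subseteq E_n$ boundedly'', which is precisely the regularity statement you are trying to prove. In a general (LB)-space a bounded subset of the limit need not be contained, let alone bounded, in any single step; that is what regularity asserts, and it cannot be assumed. Your control of the second factor is also flawed: you want a continuous inclusion of the whole direct limit into $\Hold{k}{s_0'}{\Omega}{Z}$ for some $s_0' > s_0$, but for the steps $E_m$ with $t_m < s_0'$ there is no such inclusion, so the direct limit does not embed into $\Hold{k}{s_0'}{\Omega}{Z}$. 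Only $s_0$ itself works as a common target, and that is how the paper gets Hausdorffness.

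The paper avoids this circularity by reversing the logic. Hausdorffness is obtained directly: the inclusions $\Hold{k}{t}{\Omega}{Z} \hookrightarrow \Hold{k}{s_0}{\Omega}{Z}$ induce a continuous injection of the direct limit into a Hausdorff space. For compact regularity the paper does not attempt to locate arbitrary bounded sets of the limit; instead it invokes a criterion (Proposition~1.10 of \cite{MeinArtikel}) requiring only that for each $u > s_0$ one find $t \in ]s_0,u[$ such that on the unit ball of $\Hold{k}{u}{\Omega}{Z}$ all the $\Hold{k}{s}{\Omega}{Z}$-topologies with $s \in ]s_0,t]$ agree. This is where the interpolation inequality is applied, to $\gamma^{(k)}$: on that unit ball one has $\HSnorm{\gamma^{(k)}}{0}{u} \le 1$, whence $\HSnorm{\gamma^{(k)}}{0}{t} \le \HSnorm{\gamma^{(k)}}{0}{s}^\lambda$, so the $t$-topology is continuous against the $s$-topology on that ball. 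The point is that one starts from a \emph{fixed} ball in a \emph{fixed} step --- where boundedness in the $u$-norm is given by hypothesis --- rather than from an abstract bounded set in the limit. Your interpolation idea is correct; it just needs to be deployed on balls of the steps rather than on bounded sets of the limit.
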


\begin{proof}
Since for every $t>s_0$ the inclusion map
\[
 \nnfunc{\Hold{k}{t}{\Omega}{Z} }{\Hold{k}{s_0}{\Omega}{Z}}{\gamma}{\gamma}
\]
is continuous, it follows from the direct limit property that the inclusion map from the direct limit space into the Banach space
\[
 \nnfunc{\Hold{k}{>s_0}{\Omega}{Z} }{\Hold{k}{s_0}{\Omega}{Z}}{\gamma}{\gamma}
\]
is also continuous. Since it is injective, we know that $\Hold{k}{>s}{\Omega}{Z}$ is Hausdorff.

We will show compact regularity using Proposition 1.10 in \cite{MeinArtikel}. Hence, it suffices to show that for every $u>s_0$ there is a $t\in ]s_0,u[$ such that every space 
$\Hold{k}{s}{\Omega}{Z}$ with $s\in]s_0,t]$ induces the same topology on the set $B:= \oBallin{1}{\Hold{k}{u}{\Omega}{Z} }{0}$.

 \renewcommand{\tt}{\lambda s + (1-\lambda)u}
Therefore, let $u>s_0$ be given. We may chose $t\in]s_0,u[$ arbitrarily. Once again, let $s\in]s_0,t[$. Since $t$ lies between $s$ and $u$, we may write $t=\tt$.
Now, we apply Proposition \ref{prop_log_conv}(a) to $\gamma^{(k)}$ and obtain for every $\gamma\in B$
\[
  \HSnorm{\gamma^{(k)}}{0}{t} \leq 	\Big(\HSnorm{\gamma^{(k)}}{0}{s}\Big)^\lambda
					\cdot	\Big(\underbrace{\HSnorm{\gamma^{(k)}}{0}{u}}_{	\leq 1	}	\Big)^{1-\lambda}.
\]
This inequality shows that the identity from $B\!\subseteq\! \Hold{k}{s}{\Omega}{Z}$ to $B\!\subseteq\! \Hold{k}{t}{\Omega}{Z}$ is continuous. Since the continuity of the inverse map is trivial, we have shown that the topologies coincide.
\end{proof}

\section{Lie groups associated to \Holder-continuous functions}								\label{sec_LIEGROUPS}
In the following, let $G$ be an analytic Banach-Lie group over $\K\in\smset{\R,\C}$ with Lie algebra $\g$. 

Like before, $\Omega\subseteq X$ is an open bounded convex subset of a real Banach space $X$ with $\diam\Omega\leq 1$. Let $(k,s)\in\N_0\times[0,1]$ be fixed.
We may define a pointwise Lie bracket on the function space $\Hold{k}{s}{\Omega}{\g}$ and by Theorem \ref{thm_small}, this bracket is continuous with operator norm at most $C_k$. Throughout this section, $C_k$ will always denote these constants introduced in Theorem \ref{thm_small}. Note that they do not depend on the space $\g$.

Now we can compose each $\gamma\in\Hold{k}{s}{\Omega}{\g}$ with the exponential function and obtain the following map:
\[
 \Func{ \Exp_{(k,s)} }{\Hold{k}{s}{\Omega}{\g}}{C(\Omega,G)}{\gamma}{\exp_G\circ \gamma.}
\]

\begin{theorem}[Lie groups associated with \Holder-continuous functions (Banach case)]					\label{thm_LIE_BANACH}
 Let $(k,s)\in\N_0\times[0,1]$ and a Banach-Lie group $G$ with Lie algebra $\g$ be given. Then there exists a unique Banach-Lie group structure on the group 
\[
 \Hold{k}{s}{\Omega}{G}:= \generatedby{\set{\exp_G \circ \gamma}{\gamma\in \Hold{k}{s}{\Omega}{G}}} \leq \CC(\Omega , G)
\]
 such that
\[
 \func{ \Exp_{(k,s)} }{ \Hold{k}{s}{\Omega}{\g}   }{ \Hold{k}{s}{\Omega}{G} }{\gamma}{\exp_G\circ \gamma}.
\]
becomes a local diffeomorphism around $0$.
\end{theorem}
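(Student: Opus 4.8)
The plan is to realize $\Hold{k}{s}{\Omega}{G}$ as a Lie group by transporting the Banach space structure of $\Hold{k}{s}{\Omega}{\g}$ to a neighborhood of the identity, using the standard criterion for constructing a Lie group from a candidate chart around $1$ (as in Bourbaki / Neeb's surveys): it suffices to produce an open symmetric identity neighborhood $U$ in $\Hold{k}{s}{\Omega}{\g}$ together with the injective map $\Exp_{(k,s)}$ onto a subset of $\CC(\Omega,G)$ such that the group operations, read in this chart on a possibly smaller neighborhood, are smooth. First I would fix a chart $\varphi\colon V_G \to \g$ of $G$ around $1$ with $\varphi(1)=0$, $\varphi = \exp_G^{-1}$ on a ball, and use it to define, for $\gamma$ with values in a small enough ball of $\g$, the pointwise composition; the key point is that postcomposition with a fixed analytic (in particular $\CC^\infty$ with all derivatives bounded on balls) map $\g \supseteq B \to \g$ sends $\Hold{k}{s}{\Omega}{B}$ into $\Hold{k}{s}{\Omega}{\g}$ and is smooth there.

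The heart of the argument is therefore a \emph{composition lemma}: if $f\colon B_\g \to \g$ is analytic and $\gamma \in \Hold{k}{s}{\Omega}{\g}$ has image in $B_\g$, then $f\circ\gamma \in \Hold{k}{s}{\Omega}{\g}$ and $\gamma \mapsto f\circ\gamma$ is $\CC^\infty$. The route I would take is to expand $f$ in its locally convergent power series around a point, so that $f\circ\gamma$ becomes a norm-convergent series of terms $\beta \bullet (\gamma - \gamma_0)^{\bullet j}$ built from continuous multilinear maps, and then apply Theorem \ref{thm_small} repeatedly: each monomial of degree $j$ is controlled by $C_k^{\,j-1}$ times a product of $\Hold{k}{s}{\Omega}{\cdot}$-norms, and since the constants $C_k$ do \emph{not} depend on $s$ or on the bilinear map (this is exactly the point the author flags as "important later on"), the resulting series converges in $\Hold{k}{s}{\Omega}{\g}$ on a ball of radius $\sim 1/C_k$, locally uniformly in $\gamma$ and in the parameters. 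Uniform convergence of the derived series gives smoothness. This handles the group multiplication $\mu(\gamma_1,\gamma_2) = \varphi(\exp_G\varphi^{-1}(\gamma_1)\cdot\exp_G\varphi^{-1}(\gamma_2))$, which near $0$ is given by the Baker--Campbell--Hausdorff series — an analytic $\g\times\g \to \g$ map — and likewise inversion; composing with the (smooth) diagonal and with $\Exp_{(k,s)}$ shows the chart-transported operations are smooth on a neighborhood of $0$.

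Once the composition lemma is in place, the remaining steps are bookkeeping. I would: (i) verify $\Exp_{(k,s)}$ is injective on a small ball, since $\exp_G$ is a diffeomorphism near $0$ and pointwise injectivity is then immediate; (ii) check that $\Hold{k}{s}{\Omega}{G}$, defined as the abstract subgroup of $\CC(\Omega,G)$ generated by the image, is covered by left translates of $\Exp_{(k,s)}(U)$, so the charts $g\cdot\Exp_{(k,s)}(\cdot)$ form an atlas; (iii) invoke the Lie-group-from-local-data theorem to get a unique Banach-Lie group structure making $\Exp_{(k,s)}$ a local diffeomorphism around $0$, modeled on the Banach space $\Hold{k}{s}{\Omega}{\g}$; and (iv) note uniqueness follows because any two such structures share the chart $\Exp_{(k,s)}$ near $1$ and agree by translation. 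The main obstacle is unquestionably step two of the previous paragraph — making the composition/BCH estimate work in the \Holder{} norm with constants independent of $s$ — and Theorem \ref{thm_small} is precisely the tool designed to clear it; the rest is the routine "build a Lie group from a local chart" machinery, which I would cite rather than reprove, in the spirit of \cite{MeinArtikel}.
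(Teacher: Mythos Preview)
Your approach is correct and follows the same overall architecture as the paper: use Theorem~\ref{thm_small} to control the BCH multiplication on $\Hold{k}{s}{\Omega}{\g}$, check injectivity of $\Exp_{(k,s)}$ near $0$ via local exponentiality of $G$, and then invoke a local-to-global Lie group construction (the paper cites Corollary~1.8 of \cite{MeinArtikel} for exactly this).

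The one genuine difference is how the BCH analyticity is obtained. You prove a general \emph{composition lemma} (analytic $f$ induces a smooth superposition operator on $\Hold{k}{s}{\Omega}{\g}$) by expanding $f$ in a power series and bounding the $j$-th monomial via iterated applications of Theorem~\ref{thm_small}, picking up factors $C_k^{j-1}$. The paper sidesteps this entirely with a normalization trick: it first rescales the norm on $\g$ so that $\opnorm{[\cdot,\cdot]_{\g}}\leq 1/C_k$, whence Theorem~\ref{thm_small} gives $\opnorm{[\cdot,\cdot]}\leq 1$ on $\Hold{k}{s}{\Omega}{\g}$. Now $\Hold{k}{s}{\Omega}{\g}$ is itself a Banach-Lie algebra with a compatible norm, so the \emph{abstract} BCH convergence theorem (on $\{\|x\|+\|y\|<\log 2\}$) applies directly; one then just observes that BCH on the function space coincides with pointwise BCH because the bracket is pointwise. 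Your route yields a more general tool (the composition lemma), but the paper's route is shorter and avoids tracking the growth of $C_k^{j-1}$ against the Taylor coefficients of an arbitrary analytic map.
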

\begin{proof}
We start by choosing a compatible norm $\gnorm{\cdot}$ on $\g$ with the additional property that
\[
 \gnorm{[x,y]}\leq \frac{1}{C_k}\gnorm{x}\gnorm{y}
\]
 for all $x,y\in\g$. This means that  $\opnorm{[\cdot,\cdot]}\leq \frac{1}{C_k}$.
Then the space $\Hold{k}{s}{\Omega}{\g}$ carries a continuous Lie bracket of operator norm at most $1$, due to Theorem \ref{thm_small}:
\[
 \smfunc{[\cdot,\cdot]}{\Hold{k}{s}{\Omega}{\g}\times\Hold{k}{s}{\Omega}{\g}}{\Hold{k}{s}{\Omega}{\g}}
\]
 turning it into a Banach-Lie algebra.
The Lie algebra $\g$ becomes a closed Lie subalgebra of $\Hold{k}{s}{\Omega}{\g}$ by identifying elements of $\g$ with constant functions.

Now, having transferred the Banach-Lie algebra structure from $\g$ to $\Hold{k}{s}{\Omega}{\g}$, we would like to do the same with the group structure.

It is known that (see e.g. \cite[Chapter II, \S 7.2, Proposition 1]{MR1728312}) in a Banach-Lie algebra with compatible norm, the \BCH{}-series converges on 
\[
 U_\g:=\set{(x,y)\in\g\times\g}{\norm{x}+\norm{y}<\log2}
\]
and defines an analytic multiplication: $ \smfunc{*}{U_\g}{\g}.$
Since $\Hold{k}{s}{\Omega}{\g}$ is a Banach-Lie algebra in its own right, we also have a \BCH{}-multiplication there:
$ \smfunc{*}{U_{\Hold{k}{s}{\Omega}{\g}}}{\Hold{k}{s}{\Omega}{\g}}.$
The \BCH{}-series is defined only in terms of iterated Lie brackets. Since addition and Lie bracket of elements in $\Hold{k}{s}{\Omega}{\g}$ correspond to the pointwise operations in $\g$, the \BCH{}-multiplication in $\Hold{k}{s}{\Omega}{\g}$ corresponds to the pointwise \BCH{}-multiplication of functions.

Since $G$ is a Banach-Lie group, it is locally exponential, therefore there is a number $\injepsilon>0$ such that $\exp_G|_{\oBallin{\injepsilon}{\g}{0}}$ is injective. 
Since the \BCH{}-multiplication on $\g$ is continuous, there is a $\delta>0$ such that $\oBallin{\delta}{\g}{0}\times\oBallin{\delta}{\g}{0}\subseteq U_\g$ and $\oBallin{\delta}{\g}{0}*\oBallin{\delta}{\g}{0}\subseteq \oBallin{\injepsilon}{\g}{0}$.

Let $\CC(\Omega , G)$ be the (abstract) group of all continuous maps from $\Omega$ to $G$ with pointwise multiplication.
Then we may define the following map
\[
 \func{\Exp_{(k,s)}}{\Hold{k}{s}{\Omega}{\g}   }{ \CC(\Omega,G) }{\gamma}{\exp_G\circ \gamma}.
\]
The restriction of $\Exp_{(k,s)}$ to $\oBallin{\injepsilon}{\Hold{k}{s}{\Omega}{\g}}{0}$ is injective since $\exp_G|_{\oBallin{\injepsilon}{\g}{0}}$ is injective.

Now, all hypotheses for Corollary 1.8 in \cite{MeinArtikel} are satisfied for \hbox{$U:=\oBallin{\delta}{\Hold{k}{s}{\Omega}{\g} }{0}$,} \hbox{$V:=\oBallin{\injepsilon}{\Hold{k}{s}{\Omega}{\g} }{0}$} and $H:=\CC(\Omega,G)$. Therefore, by Corollary 1.8 in \cite{MeinArtikel}, we get a unique $\CC^\omega$-Lie group structure on the group $\generatedby{\Exp_{(k,s)}( U  )}$ such that 
\[
 \smfunc{\Exp_{(k,s)} |_U}{U\subseteq \Hold{k}{s}{\Omega}{\g} }{ \generatedby{\Exp_{(k,s)}( U  )} }
\]
 is a $\CC^\omega$-diffeomorphism.

But this group, that now has a Lie group structure, is exactly the group $\Hold{k}{s}{\Omega}{G}:= \generatedby{\set{\exp_G \circ \gamma}{\gamma\in \Hold{k}{s}{\Omega}{\g}}}$ defined above. This is the case because for every generator $\exp_G \circ \gamma$ with $\gamma\in \Hold{k}{s}{\Omega}{\g}$ there is an $n\in\N$ such that $\frac{1}{n}\gamma\in U$ and therefore 
\[
 \exp_G \circ \gamma= \exp_G \circ \left(n\cdot \frac{1}{n}\gamma \right)=\left(\exp_G\circ \left(\frac{1}{n}\gamma \right)\right)^n\in \generatedby{\Exp_{(k,s)}(U) }.\qedhere
\]
\end{proof}

\begin{theorem}[Lie groups associated with \Holder-continuous functions ((LB) case)]						\label{thm_LIE_LB}
 Let $(k,s)\in \N_0\times [0,1[$ be given. Then there exists a unique Lie group structure on the group
\[
 \Hold{k}{>s}{\Omega}{G} := \bigcup_{t\in]s,1]} \Hold{k}{t}{\Omega}{G}
\]
 such that
\[
 \Func{\Exp_{(k,>s)}:=\bigcup_{t\in]s,1]}\Exp_{(k,s)}}{\Hold{k}{>s}{\Omega}{\g}}{\Hold{k}{>s}{\Omega}{G}}{\gamma}{\exp_G\circ\gamma}
\]
is a local diffeomorphism around $0$.
\end{theorem}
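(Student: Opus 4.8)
The plan is to derive this from Theorem C of \cite{MeinArtikel}, which turns a suitable ascending sequence of Banach--Lie groups into an (LB)-Lie group; the bulk of the work is to exhibit such a sequence and to check its hypotheses. First I would replace the directed union over the interval $]s,1]$ by a countable one: pick a strictly decreasing sequence $(u_n)_{n\in\N}$ in $]s,1]$ with $u_n\to s$, and set $\g_n:=\Hold{k}{u_n}{\Omega}{\g}$ and $G_n:=\Hold{k}{u_n}{\Omega}{G}$. Since $\diam\Omega\le1$, Proposition \ref{prop_incl_s} gives continuous inclusions $\g_n\hookrightarrow\g_{n+1}$ of operator norm $\le1$, and any $\Hold{k}{t}{\Omega}{\g}$ with $t>s$ lies in some $\g_n$ (because $u_n\to s<t$); hence $\Hold{k}{>s}{\Omega}{\g}=\bigcup_n\g_n$, this countable subsystem being cofinal so that the locally convex direct limit is unchanged, and likewise $\Hold{k}{>s}{\Omega}{G}=\bigcup_nG_n$ as abstract groups. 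By Theorem \ref{thm_LIE_BANACH} each $G_n$ is a Banach--Lie group with Lie algebra $\g_n$ for which $\Exp_{(k,u_n)}$ is a local diffeomorphism at $0$.

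The next step is to make the exponential charts of the $G_n$ compatible. I would fix once and for all a compatible norm $\gnorm{\cdot}$ on $\g$ with $\gnorm{[x,y]}\le C_k^{-1}\gnorm{x}\gnorm{y}$; this is possible precisely because the constant $C_k$ of Theorem \ref{thm_small} depends only on $k$ and not on the \Holder-exponent. By Theorem \ref{thm_small}, this \emph{single} norm makes the pointwise bracket on \emph{every} $\g_n$ have operator norm $\le1$, so the \BCH-series converges on each set $\set{(x,y)\in\g_n\times\g_n}{\norm{x}+\norm{y}<\log 2}$, and the neighbourhoods $\oBallin{\injepsilon}{\g}{0}$ and $\oBallin{\delta}{\g}{0}$ used in the proof of Theorem \ref{thm_LIE_BANACH} depend only on $\g$ and $G$; so the corresponding chart data in the $\g_n$ are nested compatibly along $\g_n\hookrightarrow\g_{n+1}$. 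Moreover that inclusion is continuous, linear, a homomorphism for the pointwise bracket and intertwines the maps $\gamma\mapsto\exp_G\circ\gamma$; hence, read in the \BCH-coordinates of Theorem \ref{thm_LIE_BANACH}, the inclusion $G_n\hookrightarrow G_{n+1}$ is smooth near the identity, and being a group homomorphism it is smooth everywhere. Thus $(G_n)_{n\in\N}$ is an ascending sequence of Banach--Lie groups with morphisms as bonding maps and compatible exponential charts.

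Finally, Proposition \ref{prop_holder_limit} shows that the (LB)-space $\Hold{k}{>s}{\Omega}{\g}=\bigcup_n\g_n$ is Hausdorff and compactly regular. Together with the preceding paragraph this provides the hypotheses of Theorem C of \cite{MeinArtikel}, which then yields a Lie group structure on $\Hold{k}{>s}{\Omega}{G}=\bigcup_nG_n$ modelled on the (LB)-space $\Hold{k}{>s}{\Omega}{\g}$, for which the inclusions $G_n\hookrightarrow\Hold{k}{>s}{\Omega}{G}$ are morphisms and the exponential map restricts on each $\g_n$ to $\Exp_{(k,u_n)}$. Since each $\Exp_{(k,u_n)}$ is a local diffeomorphism at $0$ and $\Hold{k}{>s}{\Omega}{\g}$ carries the direct limit topology, $\Exp_{(k,>s)}=\bigcup_n\Exp_{(k,u_n)}$ is a local diffeomorphism at $0$. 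For uniqueness, any Lie group structure on $\Hold{k}{>s}{\Omega}{G}$ turning $\Exp_{(k,>s)}$ into a local diffeomorphism at $0$ agrees with the constructed one on a neighbourhood of the identity, so the identity map is an isomorphism of Lie groups. I expect the main obstacle to be the verification that Theorem C really applies --- concretely, that one norm on $\g$ controls all the $\g_n$ at once so that the \BCH-charts are genuinely compatible (this is exactly the point for which the $s$-independence of $C_k$ was recorded), and the compact-regularity hypothesis, supplied by Proposition \ref{prop_holder_limit}; the rest is routine transport of structure along the inclusions.
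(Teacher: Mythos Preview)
Your proposal is correct and follows essentially the same route as the paper: pass to a countable cofinal sequence $(u_n)$ in $]s,1]$, fix once and for all a norm on $\g$ with $\gnorm{[x,y]}\le C_k^{-1}\gnorm{x}\gnorm{y}$ (exploiting that $C_k$ is $s$-independent) so that all the pointwise brackets and all the inclusions $\g_n\hookrightarrow\g_{n+1}$ have operator norm at most $1$, invoke Proposition~\ref{prop_holder_limit} for the Hausdorff property of the direct limit, and then apply Theorem~C of \cite{MeinArtikel}. The only minor difference in emphasis is that the paper also records explicitly the injectivity of $\Exp$ on the common $0$-neighbourhood $\bigcup_t\oBallin{\injepsilon}{\Hold{k}{t}{\Omega}{\g}}{0}$ as an input to Theorem~C, which in your write-up is implicit in the ``compatible exponential charts'' discussion.
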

\begin{proof}
 We wish to use Theorem~C in \cite{MeinArtikel}.
Let $\seqn{t_n}$ be a strictly decreasing cofinal sequence in $]s,1]$, e. g. $t_n:= s + (1-s)\cdot\frac{1}{n}$.
For every $n\in\N$, set \hbox{$G_n := \Hold{k}{t_n}{\Omega}{G}$.} The bonding maps $\smfunc{j_n}{G_n}{G_{n+1} }$ are group homomorphisms.
Since $j_n\circ\Exp_{(k,t_n)}=\Exp_{(k,t_{n+1})}\circ i_n$ with the continuous linear inclusion map $\smfunc{i_n}{\Hold{k}{t_n}{\Omega}{\g} }{ \Hold{k}{t_{n+1}}{\Omega}{\g} }$, we see that each $j_n$ is analytic with $\L(j_n)=i_n$.

Like in the proof of Theorem \ref{thm_LIE_BANACH}, we choose the norm on $\g$ such that
\[
	\gnorm{[x,y]}\leq \frac{1}{C_k}\gnorm{x}\gnorm{y} \hbox{  for $x,y\in\g$. }
\]
Note that this is possible because $k\in\N_0$ is fixed and the $C_k$ do not depend on $s$. This implies that the Lie brackets on the Lie algebras $\Hold{k}{t_n}{\Omega}{\g}$ and the bounded operators 
 $\smfunc{i_n}{\Hold{k}{t_n}{\Omega}{\g}}{\Hold{k}{t_{n+1}}{\Omega}{\g} }$ have operator norm at most $1$.

The locally convex direct limit is Hausdorff by Proposition \ref{prop_holder_limit}, and the exponential map $\Exp=\bigcup_{t\in]s,1]}\Exp_{(k,t)}$
 is injective on the $0$-neighborhood $\bigcup_{t\in]s,1]}\oBallin{\injepsilon}{\Hold{k}{t}{\Omega}{\g} }{0}$.
Hence, by Theorem~C of \cite{MeinArtikel}, there is a unique complex analytic Lie group structure on $G$ such that $\Exp$ is a local diffeomorphism at $0$.
\end{proof}


\addcontentsline{toc}{section}{References}

\bibliography{literatur}{}
\bibliographystyle{plain}

\end{document}